\newtheorem{thm}{Theorem}[section]
\newtheorem{cor}[thm]{Corollary}
\newtheorem{lem}[thm]{Lemma}
\newtheorem{prop}[thm]{Proposition}
\theoremstyle{definition}
\newtheorem{defn}[thm]{Definition}
\newtheorem{example}{Example}
\newtheorem*{notation}{Notation and Conventions}
\theoremstyle{remark}
\newtheorem{rmk}{Remark}
\newcommand{\pslnc}{\mathrm{PSL}_n(\mathbb{C})}
\newcommand{\psltc}{\mathrm{PSL}_2(\mathbb{C})}
\newcommand{\psltr}{\mathrm{PSL}_2(\mathbb{R})}
\newcommand{\puto}{\mathrm{PU}(2,1)}
\newcommand{\poto}{\mathrm{PO}(2,1)}
\newcommand{\suto}{\mathrm{SU}(2,1)}
\newcommand{\pslc}{\mathrm{PSL}_2(\mathbb{C})}
\newcommand{\HtC}{{\mathbf H}^{2}_{\mathbb C}}
\newtheoremstyle{named}%
{}{}{\itshape}{}{\bfseries}{.}{.5em}{\thmnote{#3}}
\theoremstyle{named}
\numberwithin{equation}{section}
\begin{document}
\title{On the domination of surface-group representations in $\mathrm{PU}(2,1)$}
\author{Pabitra Barman}
\author{Krishnendu Gongopadhyay}
\address{Department of Mathematical Sciences, Indian Institute of Science Education and Research (IISER) Mohali, Knowledge City, Sector 81, S.A.S. Nagar (Mohali) 140306, Punjab, India}
\email{pabitrab@iisermohali.ac.in, pabitrabarman560@gmail.com}
\address{Department of Mathematical Sciences, Indian Institute of Science Education and Research (IISER) Mohali, Knowledge City, Sector 81, S.A.S. Nagar (Mohali) 140306, Punjab, India}
\email{krishnendu@iisermohali.ac.in}
\subjclass[2020]{Primary 20H10; Secondary 51M10, 30F40}
	\keywords{ complex hyperbolic space, surface group, punctured surfaces, domination}

\begin{abstract}
    This article explores surface-group representations into the complex hyperbolic isometry group $\mathrm{PU}(2,1)$ and presents domination results for a special class of representations called the $T$-bent representations. Let $S_{g,k}$ be a surface of genus $g$ with $k > 0$ punctures and negative Euler characteristic. We prove that for a $T$-bent representation $\rho: \pi_1(S_{g,k}) \rightarrow \mathrm{PU}(2,1)$, there exists a discrete and faithful representation $\rho_0: \pi_1(S_{g,k}) \rightarrow \mathrm{PO}(2,1)$ that dominates $\rho$ in the Bergman translation length spectrum, while preserving the lengths of the peripheral loops.
\end{abstract}
\maketitle
\section{Introduction}
Let $S_{g,k}$ be an oriented surface of negative Euler characteristic with genus $g$ and puncture $k \geq 0$. When there is no puncture, that is, $k=0$, we simply denote such a surface by $S_g$. In all other cases, we assume $k \geq 1$. 

The study of surface-group representations $\rho: \pi_1(S_{g,k}) \rightarrow G$ into different Lie groups $G$ (preferably of higher rank) is a significant aspect in Higher Teichm\"{u}ller theory (see \cite{Wienhard} for a survey). In this article, we focus on representations into $\puto$. Recall that, $\puto$ is the group of holomorphic isometries of the two dimensional complex hyperbolic plane $\HtC$. Despite sharing some structural similarities with their real counterpart, surface-group representations into $\puto$ exhibit distinct geometric and dynamical behaviors that are still not fully understood. Our aim is to explore these representations in relation to domination, a notion that has been used in analyzing surface-group representations in certain other settings.

Classically, the concept of domination is given as follows.  Let $\rho_1, \rho_2 :\pi_1(S_{g,k}) \rightarrow \pslc$ be two representations. We say $\rho_2$ \emph{dominates} $\rho_1$ if there exists $\lambda\le 1$ such that \[ \ell_{\rho_1}(\gamma) \le \lambda \cdot \ell_{\rho_2}(\gamma) \] for all $\gamma \in \pi_1(S_{g,k})$, where $\ell_{\rho_i}(\gamma)$ denotes the translation length of $\rho_i(\gamma)$ in $\mathbb{H}^3$. The domination is said to be \emph{strict} if $\lambda<1$. 

Domination in the context of surface-group representations first arose in the study of anti-de Sitter (AdS) $3$-manifolds, cf. \cite{kp}. It is an important geometric phenomenon with significant implications for surface-group representations. In 2015, Gu\'eritaud-Kassel-Wolff \cite{GKW} showed that for a closed surface $S_g$ and a non-Fuchsian representation $\rho : \pi_1(S_g) \rightarrow \psltr$, there exists a Fuchsian representation $j: \pi_1(S_g) \rightarrow \psltr$ that strictly dominates $\rho$. This theorem has direct application in the construction of compact AdS 3-manifolds. Around the same period, Deroin-Tholozan \cite{DT} showed that a representation $\rho:\pi_1(S_g) \rightarrow \psltc$ can be strictly dominated by a Fuchsian representation $\rho_0:\pi_1(S_g) \rightarrow \psltr$, unless $\rho$ is itself Fuchsian. Other progresses in dominating surface-group representations can also be found in \cite{Tholozan}, \cite{GGKW}, \cite{DMSV}, \cite{Sagman}.
 
For a punctured surface $S_{g,k}$, Gupta-Su \cite{GSu} showed that for a non-Fuchsian representation $\rho:\pi_1(S_{g,k}) \rightarrow \psltc$ with $n\ge1$, there exists a Fuchsian representation $\rho_0:\pi_1(S_{g,k}) \rightarrow \psltr$ that strictly dominates $\rho$, preserving the lengths of the peripheral loops. Note that, if we allow the lengths of the peripheral curves to increase also, then such a domination can be easily constructed by the `strip deformation', as discussed in \cite{GSu}. The above result has been generalized in \cite{BGup} for the Hilbert length spectrum and the translation length spectrum to the case when the target-group is $\pslnc$.

While this notion of domination has been studied in some depth for target groups like  $\pslnc$, $n \geq 2$, much less is known when the target is $\puto$. We shall explore this in this paper. For this case, we turn to a specific kind of representations, namely, the \emph{$T$-bent representations}, introduced by Will \cite{WillBending}. These representations are constructed using a fixed ideal triangulation of the surface and a certain boundary map.

Let $T$ be an ideal triangulation on a punctured surface $S_{g,k}$ and $\mathcal{F}_{\infty}$ be the Farey set - the set of points on $\partial \widetilde{S}_{g,k}$ corresponding to the lifts of the punctures. 
\begin{defn}  \cite{WillBending}   ($T$-bent representation)
    A representation $\rho: \pi_1(S_{g,k}) \rightarrow $Isom($\HtC$) is called \emph{$T$-bent} if there exists a $\rho$-equivariant map (known as \emph{framing}) $\phi :\mathcal{F}_{\infty} \rightarrow \partial \HtC$ such that for any ideal triangle $\Delta \in \widetilde{T}$ with vertices $a,b,c \in \mathcal{F}_{\infty}$, the images $\phi(a), \phi(b), \phi(c)$ form a real ideal triangle in $\HtC$.
\end{defn}

An element $g\in$ Isom($\HtC$) acts on a pair $(\rho,\phi)$ by $g\cdot (\rho,\phi)=(g\rho g^{-1}, g\circ \phi)$. Following the notation of Will, let $\mathcal{BR}_T$ denote the Isom($\HtC$)-classes of the pair $(\rho,\phi)$, i.e., \[ \mathcal{BR}_T:= \{ (\rho,\phi) \}/\text{Isom}(\HtC). \]
The surface-group representation variety $\text{Rep}_{(\pi_1(S_{g,k}),\puto)}$ has real dimension $16g-16+8k$. However, Will showed that $\mathcal{BR}_T$ has real dimension $12g-12+6k$ (see \Cref{willbijection}).

In this article, we show domination for these $T$-bent representations into $\puto$. Let $\sigma(A)$ denote the spectral radius of $A$. Recall that \[ \sigma(A) := \max \{|\lambda| : \lambda \text{ is an eigenvalue of } A \}. \] Let $\ell(A)$ denote the translation length of an element $A \in \suto$ with respect to the Bergman metric, a Riemannian metric on $\HtC$ (see \cref{bergmet}). From Proposition 3.10 in \cite{ParkerTrace}, we know that $\ell(A)= 2\ln\sigma(A).$ For an element $A\in \puto$, we take a representative $\tilde{A}\in \mathrm{U}(2,1)$ and define \[\ell(A):=2\ln \sigma(\tilde{A}). \] Note that any two lifts of $A$ to $\mathrm{U}(2,1)$ differ by multiplication with a scalar $\lambda$ such that $|\lambda| = 1$. So they have the same spectral radius. Consequently, Bergman translation length is well-defined on $\puto$.
\begin{defn}\label{domi}(Domination for $\puto$-representations)
    Given two representations $\rho_1, \rho_2:\pi_1(S_{g,k}) \rightarrow \puto,\ \rho_2$ is said to \emph{dominate} $\rho_1$ in the Bergman translation length spectrum if \[ \ell(\rho_1(\gamma)) \le \ell(\rho_2(\gamma)) \] for all $\gamma \in \pi_1(S_{g,k})$. The domination is said to be strict if $\ell(\rho_1(\gamma)) < \ell(\rho_2(\gamma))$ for all $\gamma \in \pi_1(S_{g,k})$.
\end{defn}
Let $\poto$ be the identity component of the isometry group of ${\bf H}^2_{\mathbb R}$. Our main result is the following. 
\begin{thm}\label{mainthm1}
    Let $S_{g,k}$ be an oriented surface of negative Euler characteristic with at least one puncture.  For a $T$-bent representation $\rho:\pi_1(S_{g,k}) \rightarrow \puto$, there exists a discrete and faithful representation $\rho_0:\pi_1(S_{g,k}) \rightarrow \poto$ that dominates $\rho$. Moreover, if $\gamma \in \pi_1(S_{g,k})$ is a peripheral loop, then the Bergman translation length of $\rho(\gamma)$ remains unchanged.
\end{thm}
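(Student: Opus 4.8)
The plan is to build $\rho$ and $\rho_0$ from a common developing map on the ideal triangulation $T$ and to compare their Bergman lengths through a spectral-radius estimate, in the spirit of the $\pslnc$ argument of \cite{BGup}. First I would fix a lift $\widetilde{T}$, normalise coordinates in $\mathrm{U}(2,1)$ so that the framing $\phi$ sends a chosen base triangle of $\widetilde{T}$ into the standard totally real plane inside $\HtC$ with $\poto$ acting by real matrices, and record that every real ideal triangle is $\puto$-congruent to this base one. Crossing an edge $e$ of $\widetilde{T}$ is then realised by a real shear (the offset of the two triangles along their common geodesic, intrinsic to each $\mathbb{R}$-plane) followed by a bending that rotates the incoming $\mathbb{R}$-plane about that geodesic into the outgoing one; I would package both into a single complex shear--bend coordinate $z_e = s_e + i\theta_e$ attached to $e$, with $s_e$ the shear and $\theta_e$ the bending angle. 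Every holonomy $\rho(\gamma)$ is then a product of the corresponding edge matrices together with the fixed combinatorial turning matrices of the triangulation.

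Next I would define $\rho_0$ by \emph{unbending}, i.e. by replacing each $z_e$ with its real part $s_e$ (equivalently $\theta_e = 0$). This keeps all triangles inside the single real plane, so $\rho_0$ lands in $\poto$, and it is the holonomy of the hyperbolic structure on $S_{g,k}$ obtained by gluing the real ideal triangles within that plane along the inherited real shears $s_e$. The $\rho$-equivariance of the framing forces the gluing to be consistent, so $\rho_0$ is a genuine homomorphism, and since it is the holonomy of a hyperbolic structure supported on the ideal triangulation it is discrete and faithful. (The point requiring care is that the real shears $s_e$ close up to an embedded developing image with the correct behaviour at each puncture, which I would deduce from the corresponding data carried by $\rho$.)

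The core of the proof is the inequality $\sigma(\rho(\gamma)) \le \sigma(\rho_0(\gamma))$. I would pass to Fock--Goncharov-type positive coordinates adapted to $T$, in which the turning matrices and the real edge matrices have nonnegative entries (the total-positivity input, the analogue of the positivity of Fuchsian holonomy used in \cite{BGup}); in these coordinates each entry of $\rho(\gamma)$ is a polynomial with nonnegative coefficients in the variables $e^{\pm z_e/2}$. Since $|e^{\pm z_e/2}| = e^{\pm s_e/2}$ and the coefficients are nonnegative, the triangle inequality gives $|\rho(\gamma)_{jk}| \le \rho_0(\gamma)_{jk}$ for every entry, where the right-hand matrix is entrywise nonnegative. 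The Perron--Frobenius/Wielandt comparison for nonnegative matrices then yields $\sigma(\rho(\gamma)) \le \sigma(\rho_0(\gamma))$, hence $\ell(\rho(\gamma)) = 2\ln\sigma(\rho(\gamma)) \le 2\ln\sigma(\rho_0(\gamma)) = \ell(\rho_0(\gamma))$, which is the asserted domination. For a peripheral loop $\gamma$ around a puncture $p$, the monodromy is conjugate to a triangular matrix whose dominant eigenvalue is the single monomial $\prod_e e^{z_e/2}$ over the edges encircling $p$; its modulus equals $\prod_e e^{s_e/2}$, independent of the bending angles $\theta_e$, so $\sigma(\rho(\gamma)) = \sigma(\rho_0(\gamma))$ and the peripheral Bergman length is preserved exactly.

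The main obstacle is the simultaneous normalisation underlying the third paragraph: one must exhibit a single coordinate system, tied to the triangulation, in which all unbent edge and turning matrices are entrywise nonnegative while the bending enters purely as the imaginary part of the edge parameters $z_e$. This forces an explicit analysis of the configuration of $\mathbb{R}$-planes sharing a geodesic in $\HtC$ and of how such a plane is rotated about that geodesic, and it is here that the geometry of $\puto$ (as opposed to the $\pslnc$ case) must be handled directly. Once that normalisation is in place, the positivity of coefficients and the entrywise comparison are formal, and the peripheral equality follows from the monomial form of the cusp eigenvalue.
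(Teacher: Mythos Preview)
Your strategy is the paper's: unbend by zeroing the angular data, establish an entrywise bound via positivity of coefficients plus the triangle inequality, then invoke the Wielandt/Perron--Frobenius spectral-radius comparison; peripheral equality follows from the triangular cusp holonomy whose diagonal depends only on the moduli, and discreteness/faithfulness of $\rho_0$ is Will's theorem once all ${\tt Z}$-invariants are real positive. Two details in your execution do not transfer verbatim from the $\pslnc$ picture and are exactly what the paper's computation (Proposition~\ref{mainprop} and Lemma~\ref{samesigma}) supplies. First, in Will's edge matrix $M_z$ the modulus $x=|z|$ and the phase $e^{i\alpha}$ sit in \emph{different} entries ($x$ and $1/x$ on the anti-diagonal, $e^{i\alpha}$ in the centre), so holonomy entries are not polynomials in a single variable $e^{\pm z_e/2}$; the correct bookkeeping is the paper's ``positive expressions'' $\sum_j c_jX_je^{i\theta_j}$ with $c_j>0$, $X_j$ a Laurent monomial in the $x_k$, and $\theta_j$ an integer combination of the $\alpha_k$, whose modulus is maximised by setting every $\alpha_k=0$ (i.e.\ replacing $z_k$ by $|z_k|$). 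Second, there is no normalisation making both the turning matrix $\mathcal{E}$ and the unbent holonomies entrywise nonnegative: the paper shows instead that every $\rho_0(\gamma)$ carries the fixed sign pattern $\bigl(\begin{smallmatrix}+&-&-\\-&+&+\\-&+&+\end{smallmatrix}\bigr)$, and then observes that conjugation by $\mathrm{diag}(-1,1,1)$ preserves eigenvalues, so the nonnegative matrix $B(\gamma)$ obtained by taking entrywise absolute values has the same spectral radius as $\rho_0(\gamma)$ while dominating $|\rho(\gamma)|$ entrywise. With these two adjustments your outline becomes the paper's proof.
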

This is partial generalization of Deroin-Tholozan's work in \cite{DT} for the punctured surfaces.  We have used the ${\tt Z}$-invariant (see \Cref{ZinvSec}) associated with a pair of adjacent real ideal triangles to establish our result. These invariants also motivate a notion of \emph{bending fiber} for $T$-bent representations into $\puto$, analogous to the construction in \cite[Definition 2.14]{BGup} for $\mathrm{PSL}_n(\mathbb{C})$ using Fock-Goncharov coordinates. In our context, we adopt the following version:
\begin{defn}[Bending fiber for $\puto$-representations]\label{bendingfiber}
Two $T$-bent representations $\rho_1, \rho_2 : \pi_1(S_{g,k}) \to \puto$ are said to lie in the \emph{same bending fiber} if the absolute values of the ${\tt Z}$-invariants assigned to each edge of $T$ coincide. In that case, we say that $\rho_1$ and $\rho_2$ are related by a \emph{bending deformation}..
\end{defn}

With this terminology, we can restate \Cref{mainthm1} in the language of bending fibers. 
\begin{thm}\label{mainthm2}
    Let $S_{g,k}$ be an oriented surface of negative Euler characteristic with at least one puncture and $\rho:\pi_1(S_{g,k}) \rightarrow \puto$ be a $T$-bent representation. Then the unique discrete and faithful representation $\rho_0:\pi_1(S_{g,k}) \rightarrow \poto$ in the bending fiber of $\rho$ dominates $\rho$, keeping the Bergman translation lengths of the peripheral loops unchanged.  
\end{thm}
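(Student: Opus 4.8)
The plan is to construct $\rho_0$ explicitly as the \emph{unbent} representation sitting inside the bending fiber of $\rho$, and then to reduce the whole statement to a single spectral-radius inequality. Recall that the framing $\phi$ sends each ideal triangle of $\widetilde{T}$ to a real ideal triangle, which therefore spans a totally geodesic copy of $\mathbf{H}^2_{\mathbb{R}}\hookrightarrow\HtC$ whose stabilizer in $\puto$ is a conjugate of $\poto$. Two adjacent triangles sharing an edge $e$ are glued with a prescribed ${\tt Z}$-invariant (see \Cref{ZinvSec}), whose modulus encodes the real shear along $e$ and whose phase encodes the bending of the two $\mathbf{H}^2_{\mathbb{R}}$-planes across the common geodesic. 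Setting every bending phase to its unbent value while retaining all the shears produces a representation all of whose triangle images lie in one common totally geodesic plane; thus it takes values in $\poto$, and it is precisely the representation distinguished in the bending fiber of $\rho$ in the sense of \cite[Definition 2.14]{BGup}. Since the retained shear data is that of a complete finite-area hyperbolic structure carried by the ideal triangulation, this $\rho_0$ is the holonomy of that structure, hence discrete and faithful, and it is the unique such element of the fiber. Because $\ell(A)=2\ln\sigma(A)$, both \Cref{mainthm1} and its restatement \Cref{mainthm2} reduce to proving
\[ \sigma(\rho(\gamma)) \le \sigma(\rho_0(\gamma)) \qquad \text{for every } \gamma\in\pi_1(S_{g,k}), \]
together with equality on the peripheral classes.

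For the estimate I would work with the word decomposition of $\rho(\gamma)$ furnished by the ${\tt Z}$-coordinates. Lifting to $\suto$, any $\rho(\gamma)$ is a product of \emph{edge matrices} (hyperbolic translations along the geodesics dual to the crossed edges, governed by the real shears) and \emph{triangle matrices} (the cyclic maps permuting the sides of a real ideal triangle). The crucial structural point is that the bent word differs from the real word $\rho_0(\gamma)\in\soto$ only by the insertion, at each crossed edge, of an elliptic factor rotating the direction transverse to $\mathbf{H}^2_{\mathbb{R}}$ by the corresponding bending phase; the hyperbolic shear factors are identical for $\rho$ and $\rho_0$. Thus $\rho_0(\gamma)$ is obtained from $\rho(\gamma)$ by deleting all bending rotations, and the entire comparison is confined to how these unit-modulus elliptic insertions affect the spectral radius of a fixed product of hyperbolic pieces.

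The heart of the argument, and the step I expect to be the main obstacle, is the inequality $\sigma(\rho(\gamma)) \le \sigma(\rho_0(\gamma))$ itself. The mechanism is that bending inserts norm-preserving elliptic factors transverse to the common plane, which can only \emph{fold} the broken geodesic realizing the combinatorial length of $\gamma$ and hence shorten the actual translation distance, whereas the unbent configuration stretches maximally because its translating geodesic stays inside a single $\mathbf{H}^2_{\mathbb{R}}$. To make this rigorous I would fix a basis adapted to the Hermitian form in which the shared-edge geodesic is diagonalized, write each bending factor as a block-diagonal elliptic rotation, and bound $\sigma$ of the bent product by $\sigma$ of the unbent product using submultiplicativity with respect to a norm for which the elliptic factors are isometries while the hyperbolic factors carry all the expansion. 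Equivalently, one shows that along the bending fiber the function assigning to the bending phases the value $\sigma(\rho(\gamma))$ is maximized at the unbent point, via an extremality analysis of the leading eigenvalue; the delicacy is that the factors do not commute, so controlling the maximum requires the explicit algebraic form of the edge and triangle matrices in the ${\tt Z}$-coordinates rather than a generic norm bound.

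Finally, for a peripheral loop $\gamma$, equality should follow from $\phi$ being $\rho$-equivariant and sending the lift of the corresponding puncture to a boundary point fixed by $\rho(\gamma)$. The bending for a cusp occurs only along the interior edges fanning out of the puncture, so the product giving the peripheral holonomy telescopes and its spectral radius is determined solely by the common peripheral data shared by $\rho$ and $\rho_0$; hence $\ell(\rho(\gamma))=\ell(\rho_0(\gamma))$. Combining the inequality for interior classes with this peripheral equality yields \Cref{mainthm1}, and, phrased through the bending-fiber language, \Cref{mainthm2}.
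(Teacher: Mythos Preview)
Your framing is sound: you correctly identify $\rho_0$ as the element of the bending fiber obtained by setting every phase to zero, you correctly reduce the statement to $\sigma(\rho(\gamma))\le\sigma(\rho_0(\gamma))$, and your description of the peripheral case is essentially right (the peripheral holonomy is triangular with diagonal $(x_1\cdots x_r,\ e^{i\sum\alpha_j},\ (x_1\cdots x_r)^{-1})$, so its spectral radius is independent of the phases). One small correction: the real positive shear data need not satisfy the cusp completeness equations, so $\rho_0$ is not in general the holonomy of a \emph{complete} finite-area structure; its discreteness and faithfulness come instead from Will's result (\cite[Theorem~2]{WillBending}) on representations with real positive ${\tt Z}$-invariants.

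The substantive gap is the non-peripheral inequality. Your proposed mechanism, ``submultiplicativity with respect to a norm for which the elliptic factors are isometries while the hyperbolic factors carry all the expansion'', does not give what you need. Writing $M_{x,\alpha}=D_\alpha M_{x,0}$ with $D_\alpha=\mathrm{diag}(1,e^{i\alpha},1)$, any submultiplicative norm with $\|D_\alpha\|=1$ yields $\sigma(\rho(\gamma))\le\prod_i\|H_i\|$ where the $H_i$ are the real building blocks; but exactly the same bound holds for $\sigma(\rho_0(\gamma))$, so you have bounded both quantities by a common number rather than one by the other. To conclude $\sigma(\rho(\gamma))\le\sigma(\rho_0(\gamma))$ this way you would need the norm to be \emph{sharp} on $\rho_0(\gamma)$, i.e.\ $\|\rho_0(\gamma)\|=\sigma(\rho_0(\gamma))$, and there is no reason for this to hold for a non-normal product of non-commuting factors. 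Your fallback, an extremality analysis of the leading eigenvalue over the bending torus, is stated but not carried out, and it is exactly this step that contains all the content.

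What the paper actually does is more algebraic and makes the inequality entrywise rather than norm-wise. One checks that each building block $M_{z}\mathcal{E}$ and $M_{z}\mathcal{E}^{-1}$ has a fixed sign pattern with entries that are ``positive expressions'' (sums $\sum c_jX_je^{i\theta_j}$ with $c_j>0$ and $X_j$ monomials in the $x_i$), and that this sign pattern is stable under multiplication. Consequently $|\rho(\gamma)|$ is entrywise $\le$ the nonnegative matrix $B(\gamma)$ obtained by setting all phases to $0$, and the Perron--Frobenius type inequality $|A|\le B\Rightarrow\sigma(A)\le\sigma(B)$ (\cite[Theorem~8.1.18]{HJ}) gives $\sigma(\rho(\gamma))\le\sigma(B(\gamma))$. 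A final sign-conjugation shows $\sigma(B(\gamma))=\sigma(\rho_0(\gamma))$. This entrywise/Perron--Frobenius step is the missing idea in your outline; once you have it, the rest of your plan goes through.
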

For more general surface-group representations into $\puto$, we expect to prove analogous dominating-type results in the spirit of \cite{BGup}, using the invariants associated with triples and quadruples of flags of $\HtC$, as developed in \cite{WillMarche}.

The trace of an element $A\in \suto$ also determines its geometric action and is directly associated with $\ell(A)$, as described in \cite{ParkerTrace}. As discussed earlier, any two lifts of an element in $\puto$ differ by a unitary scalar. While the trace of a lift $\tilde{A}$ may vary under such scalar multiplication, its modulus remains unchanged. Therefore, $|\mathrm{tr}(\rho(\gamma))|$ is a well-defined function on $\puto$. As a corollary of \Cref{mainthm1}, we also obtain:

\begin{cor}\label{trace_ineq}
Let $\rho$ and $\rho_0$ be as in \Cref{mainthm1}. Then \[ |\mathrm{tr}(\rho(\gamma))| \le |\mathrm{tr}(\rho_0(\gamma))| \] for all $\gamma \in \pi_1(S_{g,k})$.
\end{cor}
We have also experimentally observed that the inequality does not extend for the discriminator function $f(\mathrm{tr}(\rho(\gamma)))$ (see \Cref{discri}). Further details are discussed in \hyperref[sec:appendix]{the Appendix}.
\begin{rmk}
    Note that \Cref{mainthm1} does not guarantee the existence of a representation $\rho_0:\pi_1(S_{g,k}) \to \poto$ that strictly dominates $\rho$. For instance, if a non-trivial, non-peripheral curve $\gamma \in \pi_1(S_{g,k})$ passes through a subsurface of $S_{g,k}$ where the ${\tt Z}$-invariants are already real and positive, we get $\rho_0(\gamma)= \rho(\gamma)$.
\end{rmk}
\subsection*{\textbf{Acknowledgements}} The authors would like to thank Subhojoy Gupta for indicating this problem to us and for motivations and useful discussions. The authors also thank Fanny Kassel and John R. Parker for valuable comments and discussions. 

This research was supported in part by the International Centre for Theoretical Sciences (ICTS), Tata Institute of Fundamental Research, through participation in the program \emph{New Trends in Teichmüller Theory}. 

Barman sincerely thanks Pranab Sardar for valuable support during his post doctoral stay at IISER Mohali. Gongopadhyay acknowledges ANRF research Grant CRG/2022/003680, and DST-JSPS Grant DST/INT/JSPS/P-323/2020.

\section{Preliminaries}
This section delves into the preliminaries and foundational setup utilized in this article. 
\subsection{Complex hyperbolic plane and its isometries} Let us consider the non-degenerate, indefinite Hermitian form on $\mathbb{C}^3$ of signature (2,1) given by:
\begin{equation}\label{hform}
    \langle \textbf{z}, \textbf{w} \rangle =z_1\bar{w}_3+z_2\bar{w}_2+z_3\bar{w}_1
\end{equation}
where $\textbf{z} = (z_1, z_2, z_3)^t$ and $\textbf{w} = (w_1, w_2, w_3)^t \in \mathbb{C}^3$.
In the canonical basis of $\mathbb{C}^3$, this Hermitian form is given by the matrix 
\[ J= \begin{pmatrix}
    0&0&1\\0&1&0\\1&0&0
\end{pmatrix}, \] 
so that $\langle \textbf{z}, \textbf{w} \rangle = \textbf{z}^* J \textbf{w}$.

Let $V_-$ and $V_0$ denote the set of negative and null vectors in $\mathbb{C}^{2,1}$ respectively, i.e., 
\[ V_-= \{\textbf{v}\in \mathbb{C}^{2,1}\ |\ \langle \textbf{v}, \textbf{v} \rangle <0 \} \text{ and } V_0= \{\textbf{v}\in \mathbb{C}^{2,1}\ |\ \langle \textbf{v}, \textbf{v} \rangle =0 \}. \]
Note that both $V_-$ and $V_0$ are invariant under scalar multiplication by nonzero complex numbers, i.e., if $\textbf{v} \in V_-$ (respectively, $V_0$), then $c\textbf{v} \in V_-$ (respectively, $V_0$) for all $c \in \mathbb{C}^*$. 

The complex hyperbolic 2-space $\mathbb{H}^2_{\mathbb{C}}$ is defined as the projectivization of $V_-$:
\[ \HtC := \mathbb{P}V_-. \] 
Its boundary is given by the projectivization of null vectors: $\partial \HtC =\mathbb{P}V_0$.
This realization of $\HtC$ corresponds to the \emph{Siegel model} (see \cite[Chapter~4]{GoldmanCH}), which is particularly convenient for describing the boundary via Heisenberg coordinates.

The \emph{Heisenberg coordinates} (see \cite[\S 4]{ParkerNotes} or \cite[\S 2]{WillBending} for more details) of a point $(z_1,z_2,1)^t \in \partial \HtC$ is given by $[\zeta,t]$, where $z_2=\zeta \sqrt{2}$ and $z_1=-|\zeta|^2+it$.

The metric on $\HtC$, known as the Bergman metric is given by
\begin{equation}\label{bergmet}
    \cosh^2\bigg(\frac{d(z,w)}{2}\bigg)=\frac{\langle \textbf{z},\textbf{w}\rangle \langle \textbf{w},\textbf{z} \rangle}{\langle \textbf{z},\textbf{z}\rangle \langle \textbf{w},\textbf{w} \rangle}. 
\end{equation}

The holomorphic isometry group of $\HtC$ is the projective unitary group $\puto$, consisting of projective linear transformations that preserve the Hermitian form (up to scalar multiple). Explicitly, $\puto = \mathrm{U}(2,1)/ \{\lambda I \}$, where $\mathrm{U}(2,1)$ is the group of matrices $A \in \mathrm{GL}_3(\mathbb{C})$ satisfying $A^*JA = J$, with $J$ being the Hermitian matrix corresponding to the chosen form and $\lambda \in \mathbb{C}^*$. This group acts transitively and holomorphically on $\HtC$, preserving both the complex structure and the Bergman metric. The isometry group Isom($\HtC$) is generated by $\puto$ and the antiholomorphic reflections along real Lagrangian planes (see \cref{realsym}).

Much like in the real hyperbolic case, elements of $\puto$ are classified into three types based on their fixed point behavior: \emph{loxodromic} (fixing exactly two points on the boundary $\partial\HtC$), \emph{parabolic} (fixing exactly one point on $\partial\HtC$) and \emph{elliptic} (fixing a point in $\HtC$). This classification can be determined using the trace of a lift to $\suto$, as described below.
\begin{thm}\label{discri}\cite{GoldmanCH}
    Let $f(z) = |z|^4 - 8\Re(z^3) + 18|z|^2 - 27$. For $A \in \suto$, the following holds:
    \begin{itemize}
        \item $A$ has an eigenvalue $\lambda$ with $|\lambda| \neq 1$ if and only if $f(\mathrm{tr}(A)) > 0$; in this case, $A$ is loxodromic.
        \item $A$ has a repeated eigenvalue if and only if $f(\mathrm{tr}(A)) = 0$; in this case, $A$ is either parabolic or special elliptic (i.e. elliptic with a repeated eigenvalue).
        \item $A$ has distinct eigenvalues all of unit modulus if and only if $f(\mathrm{tr}(A)) < 0$; in this case, $A$ is elliptic.
    \end{itemize}
\end{thm}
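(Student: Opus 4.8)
The plan is to reduce the trichotomy to the discriminant of the characteristic polynomial of $A$, so the first step is to pin down the shape of that polynomial. Writing $\tau = \mathrm{tr}(A)$ and using $\det A = 1$ together with the defining relation $A^{*}JA = J$ (which gives $A^{*} = J A^{-1} J^{-1}$, so that $A^{*}$ is conjugate to $A^{-1}$), the eigenvalue multiset $\{\lambda_1,\lambda_2,\lambda_3\}$ becomes invariant under $\lambda \mapsto 1/\bar{\lambda}$, because the eigenvalues of $A^{*}$ are $\{\bar{\lambda}_i\}$ while those of $A^{-1}$ are $\{1/\lambda_i\}$. Consequently the second elementary symmetric function satisfies $e_2 = \lambda_1\lambda_2\lambda_3\sum_i \lambda_i^{-1} = \sum_i \bar{\lambda}_i = \bar{\tau}$, and the characteristic polynomial takes the form
\begin{equation*}
\chi_A(x) = x^3 - \tau x^2 + \bar{\tau}\,x - 1.
\end{equation*}

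Next I would identify $f$ with a discriminant. Expanding the discriminant of a monic cubic $x^3 + ax^2 + bx + c$ at $a = -\tau$, $b = \bar{\tau}$, $c = -1$ produces exactly $|\tau|^4 - 8\Re(\tau^3) + 18|\tau|^2 - 27 = f(\tau)$, so that $f(\tau) = \prod_{i<j}(\lambda_i - \lambda_j)^2$. In particular $f(\tau) = 0$ if and only if $\chi_A$ has a repeated root. The sign in the remaining cases I would obtain by parametrizing the eigenvalues via the symmetry $\lambda \mapsto 1/\bar{\lambda}$ and the constraint $\prod\lambda_i = 1$. If all eigenvalues lie on the unit circle, write $\lambda_j = e^{i\theta_j}$ with $\theta_1 + \theta_2 + \theta_3 \equiv 0$; each factor is $(\lambda_i - \lambda_j)^2 = -4\,e^{i(\theta_i+\theta_j)}\sin^2\tfrac{\theta_i-\theta_j}{2}$, and on multiplying the three factors the phase collapses to $1$, giving $f(\tau) = -64\prod_{i<j}\sin^2\tfrac{\theta_i-\theta_j}{2} \le 0$, strictly negative precisely when the $\theta_j$ are distinct (the regular elliptic case). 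If instead some $|\lambda|\neq 1$, the symmetry forces the eigenvalues to be $\{ru,\,r^{-1}u,\,\bar{u}^2\}$ with $u = e^{i\alpha}$ and $r>0$, $r\neq 1$; factoring $\bar{u}$ out of $(\lambda_1-\lambda_3)(\lambda_2-\lambda_3)$ shows this product equals $\bar{u}\,(2\cos 3\alpha - (r+r^{-1}))$, whence
\begin{equation*}
f(\tau) = (r - r^{-1})^2\,\bigl(2\cos 3\alpha - (r + r^{-1})\bigr)^2 > 0,
\end{equation*}
the inequality being strict because $r + r^{-1} > 2 \ge 2\cos 3\alpha$. This settles the loxodromic case.

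Finally I would attach the geometric labels by reading off the eigenvectors. If $Av = \lambda v$, then $\langle v,v\rangle = \langle Av,Av\rangle = |\lambda|^2\langle v,v\rangle$, so an eigenvalue with $|\lambda|\neq 1$ has a null eigenvector; such an $A$ then has two boundary fixed points and is loxodromic. When the three eigenvalues are distinct and of unit modulus, the same identity gives $\langle v_i,v_j\rangle = 0$ for $i\neq j$ (since $\lambda_i\bar{\lambda}_j \neq 1$), so the eigenvectors are pairwise orthogonal; as the form has signature $(2,1)$, exactly one of them is negative, producing a fixed point inside $\HtC$, hence an elliptic isometry.

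The step I expect to be most delicate is the boundary locus $f(\tau) = 0$. A repeated eigenvalue is compatible both with a non-diagonalizable (genuinely parabolic) $A$ and with a diagonalizable $A$ all of whose eigenvalues lie on the unit circle (a boundary/reflection elliptic), and the trace alone cannot distinguish these. Thus the clean reading ``$f(\tau) = 0 \Leftrightarrow$ parabolic'' implicitly refers to the non-semisimple situation; in general one must supplement the trace criterion by checking the minimal polynomial (equivalently, the rank of $A - \lambda I$) to separate parabolics from boundary-elliptics. I would flag this as the one place where the purely trace-theoretic statement needs care, the algebraic biconditionals between eigenvalue type and the sign of $f$ being otherwise exact.
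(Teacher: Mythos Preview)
The paper does not supply its own proof of this statement: it is quoted verbatim from \cite[Theorem~3.17]{ParkerNotes} and used only as background in the preliminaries, so there is nothing in the paper to compare your argument against.

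On its own merits your argument is sound and is in fact the standard one. The key observations---that $A^{*}JA=J$ forces the eigenvalue set to be stable under $\lambda\mapsto 1/\bar\lambda$, hence $\chi_A(x)=x^{3}-\tau x^{2}+\bar\tau x-1$, and that $f(\tau)$ is exactly the discriminant $\prod_{i<j}(\lambda_i-\lambda_j)^{2}$ of this cubic---are correct, and your sign computations in the two non-degenerate regimes check out (in the loxodromic case the phase cancellation $u^{2}\bar u^{2}=1$ is precisely what makes the product manifestly positive). Your closing caveat is also well taken: the locus $f(\tau)=0$ genuinely contains both unipotent parabolics and diagonalizable ``boundary elliptics'' with a repeated unimodular eigenvalue, and the trace alone cannot separate them; Parker handles this by the same minimal-polynomial/rank check you indicate. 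So the only inaccuracy is in the theorem as stated rather than in your proof, and you have flagged it correctly.
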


\subsection{Totally geodesic subspaces and ideal triangles} There are two different types of totally geodesic maximal subspaces in $\HtC$ (see \cite[\S 5.2]{ParkerNotes}):
\begin{itemize}
    \item \textbf{Complex lines} are the intersections of $\HtC$ with complex projective lines whenever these intersections are non-empty. They are biholomorphic to the Poincar\'e disk, and thus form embedded copies of ${\mathbf H}^{1}_{\mathbb C}$ inside $\HtC$.
    \item \label{realsym} \textbf{Totally real Lagrangian planes} (also called real planes) are the intersections of $\HtC$ with totally real 2-planes in $\mathbb{C}^3$, whenever these intersections are non-empty. They are isometric to the Klein–Beltrami model of the real hyperbolic plane $\mathbb{H}^2_\mathbb{R}$. Each such plane determines an antiholomorphic involution of $\HtC$ fixing it pointwise; this involution is called its \emph{real symmetry}.
\end{itemize}
The ideal boundary of these two subspaces are called $\mathbb{C}$-circles and $\mathbb{R}$-circles respectively. Real planes are particularly important in this article, since we want the framing to map the vertices of each of the ideal triangles to the vertices of some real ideal triangle for a representation to be $T$-bent. In particular, a triangle in $\HtC$ is said to be a \emph{real ideal triangle} if its vertices lie on the boundary of a real plane, and hence, can be thought of as the ideal triangle in a copy of $\mathbf{H}^2_\mathbb{R} \subset \HtC$.

Unlike in $\mathbf{H}^2_\mathbb{R}$, ideal triangles are not uniquely determined up to isometry in $\HtC$. Instead, they are distinguished by an invariant, called Cartan invariant:
\begin{defn}[Cartan invariant] The \emph{Cartan invariant} of an ideal triangle $(v_1,v_2,v_3)$ is defined to be \[ \mathbb{A}(v_1,v_2,v_3):=arg (-\langle \textbf{v}_1, \textbf{v}_2 \rangle \langle \textbf{v}_2, \textbf{v}_3 \rangle \langle \textbf{v}_3, \textbf{v}_1 \rangle ), \] where $\textbf{v}_i$ are any lifts of $v_i$ to $\mathbb{C}^3$. This quantity is independent of the choice of lifts. 
\end{defn}
The Cartan invariant characterizes the ideal triangles as follows:
\begin{prop}\cite[\S 7]{GoldmanCH}
    Let $x=(x_1,x_2,x_3)$ and $y=(y_1,y_2,y_3)$ be two ideal triangles. Then there is a holomorphic (resp. antiholomorphic) isometry $A\in$ Isom $(\HtC)$ with $A(x_i)=y_i$ if and only if they have same (resp. opposite) Cartan invariant. Moreover, an ideal triangle has Cartan invariant 0 (resp. $\pm \pi/2$) if and only if it is contained in a real plane (resp. complex line).
\end{prop}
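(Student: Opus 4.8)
The plan is to prove the statement in two movements: first the invariance of the Cartan invariant under $\mathrm{Isom}(\HtC)$, and then the converse completeness via an explicit normalization. For invariance I would observe that a holomorphic isometry $A\in\puto$ preserves the Hermitian form, so $\langle A\mathbf{v}_i,A\mathbf{v}_j\rangle=\langle\mathbf{v}_i,\mathbf{v}_j\rangle$ for all lifts; hence the triple product $\langle\mathbf{v}_1,\mathbf{v}_2\rangle\langle\mathbf{v}_2,\mathbf{v}_3\rangle\langle\mathbf{v}_3,\mathbf{v}_1\rangle$ is unchanged and $\mathbb{A}$ is preserved. An antiholomorphic isometry is the composition of such an $A$ with complex conjugation, which sends each Hermitian product to its complex conjugate; this conjugates the triple product and flips the sign of its argument, giving $\mathbb{A}\mapsto-\mathbb{A}$. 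This settles the ``only if'' direction for both cases at once, and also shows $\mathbb{A}$ is independent of the chosen lifts, since any rescaling $\mathbf{v}_i\mapsto\mu_i\mathbf{v}_i$ multiplies the triple product by the positive real $|\mu_1\mu_2\mu_3|^2$.

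For the converse the key fact is that $\puto$ acts transitively on ordered pairs of distinct points of $\partial\HtC$ (transitivity on the boundary together with the simply transitive action of the Heisenberg translations on $\partial\HtC\setminus\{\infty\}$). So I would move $x_1$ to the point at infinity $\infty=(1,0,0)^t$ and $x_2$ to the origin $o=(0,0,1)^t$ (in Heisenberg coordinates $o=[0,0]$), and likewise for $y$. Writing the remaining vertex in Heisenberg coordinates as $[\zeta,t]$, so that its lift is $(-|\zeta|^2+it,\,\zeta\sqrt2,\,1)^t$, a direct computation with the form $J_2$ gives $\langle\mathbf{v}_1,\mathbf{v}_2\rangle=\langle\mathbf{v}_3,\mathbf{v}_1\rangle=1$ and $\langle\mathbf{v}_2,\mathbf{v}_3\rangle=-|\zeta|^2-it$, whence $-\langle\mathbf{v}_1,\mathbf{v}_2\rangle\langle\mathbf{v}_2,\mathbf{v}_3\rangle\langle\mathbf{v}_3,\mathbf{v}_1\rangle=|\zeta|^2+it$. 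Thus the Cartan invariant of the normalized triple equals $\arg(|\zeta|^2+it)=\arctan\!\big(t/|\zeta|^2\big)\in[-\pi/2,\pi/2]$ and depends only on the point $[\zeta,t]$.

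The heart of the argument is then to show that the stabilizer of the ordered pair $(\infty,o)$ acts transitively on each level set of $\mathbb{A}$. This stabilizer consists of the Heisenberg dilations $[\zeta,t]\mapsto[\lambda\zeta,\lambda^2 t]$ with $\lambda>0$ and the rotations $[\zeta,t]\mapsto[e^{i\theta}\zeta,t]$. Under a dilation the quantity $|\zeta|^2+it$ is multiplied by $\lambda^2$, and under a rotation it is unchanged, so both fix $\mathbb{A}$; conversely, given two admissible third vertices $[\zeta_1,t_1]$ and $[\zeta_2,t_2]$ with equal invariant one solves $\lambda=|\zeta_2|/|\zeta_1|$ and $e^{i\theta}=(\zeta_2/\zeta_1)/|\zeta_2/\zeta_1|$ and checks that the induced transformation carries the first to the second. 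This exhibits the orbit as exactly the level set, so equal Cartan invariant produces a holomorphic isometry $h^{-1}sg$ matching the two triples (where $g,h$ are the normalizations and $s$ the stabilizer element); composing with the conjugation $[\zeta,t]\mapsto[\bar\zeta,-t]$, which negates $\mathbb{A}$, yields the antiholomorphic statement.

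Finally, the degenerate values are read off directly from $|\zeta|^2+it$: the invariant equals $\pm\pi/2$ exactly when $\zeta=0$, i.e. when $v_3$ lies on the chain ($\mathbb{C}$-circle) through $\infty$ and $o$, forcing the three points into a complex line; and it equals $0$ exactly when $t=0$, i.e. when, after a rotation normalizing the phase of $\zeta$, the three points lie on the standard $\mathbb{R}$-circle and hence on a real plane. I expect the main obstacle to be the transitivity of the stabilizer on level sets, together with the bookkeeping needed to confirm that every value in $[-\pi/2,\pi/2]$ is realized and that none of the computations depend on the chosen lifts or on the initial normalizing isometries.
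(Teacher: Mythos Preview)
The paper does not supply its own proof of this proposition: it is quoted as a background result from \cite[\S 7]{GoldmanCH} and stated without argument. So there is nothing in the paper to compare your attempt against beyond noting that your approach is the standard one (and essentially the one Goldman uses): normalize two vertices using double transitivity on $\partial\HtC$, read off the Cartan invariant from the Heisenberg coordinates of the third vertex, and then use the diagonal stabilizer of $(\infty,o)$ to match third vertices with equal invariant.

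Your argument is correct. Two small remarks. First, in the transitivity step you write $\lambda=|\zeta_2|/|\zeta_1|$, which tacitly assumes $\zeta_1\neq 0$; the case $\zeta_1=\zeta_2=0$ (i.e.\ $\mathbb{A}=\pm\pi/2$) should be handled separately by a pure dilation $t\mapsto\lambda^2 t$, which you effectively acknowledge in the final paragraph but do not fold into the main transitivity claim. Second, the stabilizer of $(\infty,o)$ in $\puto$ is exactly the group you describe (Heisenberg dilations and rotations), but it is worth saying explicitly that there is nothing more in that stabilizer, so that ``same orbit'' and ``same level set of $\mathbb{A}$'' really coincide; this is immediate once one checks that a diagonal element of $\mathrm{U}(2,1)$ with respect to $J_2$ acts on Heisenberg coordinates by $[\zeta,t]\mapsto[\lambda e^{i\theta}\zeta,\lambda^2 t]$.
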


\subsection{The $T$-bent representations and their ${\tt Z}$-invariants}\label{ZinvSec} This part mainly revisits \cite{WillBending} and is included here for the sake of completeness. We begin by recalling a geometric property about pairs of adjacent real ideal triangles. This property is the foundation for defining the ${\tt Z}$-invariant.
\begin{prop}\label{zprop}\cite[Lemma 2]{WillBending}
    Let $\tau_1$ and $\tau_2$ be two adjacent real ideal triangles. Then there exists a unique complex number $z \in \mathbb{C} \setminus \{-1,0\}$ for which there is an element of $\mathrm{PU}(2,1)$ mapping the ordered pair $(\tau_1, \tau_2)$ to the ordered pair $(\tau_0, \tau_z)$, where $\tau_0$ and $\tau_z$ are as follows:
    \[ \tau_0= \bigg(\begin{bmatrix}
           1\\0\\0
         \end{bmatrix}, \begin{bmatrix}
           -1\\-\sqrt{2}\\1
         \end{bmatrix}, \begin{bmatrix}
           0\\0\\1
         \end{bmatrix} \bigg) \ \text{ and } \   \tau_z= \bigg(\begin{bmatrix}
           1\\0\\0
         \end{bmatrix}, \begin{bmatrix}
           0\\0\\1
         \end{bmatrix}, \begin{bmatrix}
          -|z|^2\\z\sqrt{2}\\1
         \end{bmatrix} \bigg). \]
\end{prop}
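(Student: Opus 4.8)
The plan is to reduce the statement to the Cartan-invariant classification of ideal triangles and then to pin down the residual freedom by computing a single stabilizer. Throughout I write the common edge of the adjacent triangles as the pair of shared vertices, so that $\tau_1=(A,R_1,B)$ and $\tau_2=(A,B,R_2)$ with $A,B$ shared and $R_1,R_2$ the free vertices, the orderings matching those of $\tau_0=(e_1,w,e_3)$ and $\tau_z=(e_1,e_3,v_z)$; here $e_1=(1,0,0)^t$, $e_3=(0,0,1)^t$, and in Heisenberg coordinates $w=[-1,0]$, $v_z=[z,0]$.

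For existence, I would first record that $\tau_0$ is a real ideal triangle: with the form $J_2$ one computes $\langle \mathbf{v}_1,\mathbf{v}_2\rangle\langle \mathbf{v}_2,\mathbf{v}_3\rangle\langle \mathbf{v}_3,\mathbf{v}_1\rangle = -1$, so $\mathbb{A}(\tau_0)=0$. Since $\tau_1$ is a real ideal triangle it also has Cartan invariant $0$, so by the Cartan-invariant classification Proposition there is a holomorphic isometry $g\in\puto$ sending the ordered triple $\tau_1$ to $\tau_0$. Then $g(\tau_2)$ is a real ideal triangle sharing the edge $(e_1,e_3)$ with $\tau_0$, hence of the form $(e_1,e_3,v')$. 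Writing $v'=[\zeta,t]$, a short Cartan-invariant computation gives $\mathbb{A}(e_1,e_3,v')=\arg(|\zeta|^2+it)$, which vanishes exactly when $t=0$ and $\zeta\neq 0$. Thus $v'=[\zeta,0]$ is precisely the free vertex of $\tau_\zeta$, so $g(\tau_2)=\tau_z$ with $z=\zeta$. Non-degeneracy ($v'\neq e_3$) forces $z\neq 0$, and distinctness of the two adjacent triangles ($v'\neq w=[-1,0]$) forces $z\neq -1$.

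The substantive step is uniqueness, which I would deduce from the triviality of the stabilizer of the ordered triple $\tau_0$ in $\puto$. First I compute the stabilizer of the pair $(e_1,e_3)$: imposing $A^*J_2A=J_2$ on a matrix fixing both $e_1$ and $e_3$ forces $A=\mathrm{diag}(\lambda,b,\bar\lambda^{-1})$ with $|b|=1$, and this element acts on Heisenberg coordinates by $[\zeta,t]\mapsto[b\bar\lambda\,\zeta,\ |\lambda|^2 t]$. Requiring in addition that $A$ fix $w=[-1,0]$ gives $b\bar\lambda=1$, whence (using $|b|=1$) $|\lambda|=1$, $b=\lambda$, and $\bar\lambda^{-1}=\lambda$; thus $A=\lambda I$, which is trivial in $\puto$. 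Therefore the element $g$ carrying $\tau_1$ to $\tau_0$ is unique: if $g'$ were another such element, $g'g^{-1}$ would fix $\tau_0$ and hence equal the identity, so $g'=g$ and consequently $\tau_{z'}=g'(\tau_2)=g(\tau_2)=\tau_z$, giving $z'=z$ since $z\mapsto\tau_z$ is injective (its free vertex is $[z,0]$).

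I expect the stabilizer computation to be the main obstacle, since everything else follows from the cited classification together with routine Heisenberg-coordinate bookkeeping. The one point requiring care is the compatibility of the vertex orderings of $(\tau_1,\tau_2)$ with those of $(\tau_0,\tau_z)$: it is exactly this matching that sends the shared edge to $(e_1,e_3)$ and that isolates the two excluded values $z=0$ (degeneration onto $e_3$) and $z=-1$ (coincidence with the free vertex of $\tau_0$).
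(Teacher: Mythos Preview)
The paper does not prove this proposition; it is quoted from \cite{WillBending} and used as a black box, so there is no in-paper argument to compare against. Your proof is correct and self-contained: existence follows from the Cartan-invariant classification together with the computation $\mathbb{A}(e_1,e_3,[\zeta,t])=\arg(|\zeta|^2+it)$, which forces $t=0$ and $\zeta\neq 0$; uniqueness follows from your verification that the pointwise stabilizer of the ordered triple $\tau_0$ in $\puto$ is trivial (a diagonal element $\mathrm{diag}(\lambda,b,\bar\lambda^{-1})$ fixing $[-1,0]$ must satisfy $b\bar\lambda=1$, hence be scalar). The identification of the excluded values $z=0$ and $z=-1$ is also handled correctly.
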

\begin{rmk}
    In Heisenberg coordinates, the triangles $\tau_0$ and $\tau_z$ are given by \[ \tau_0=(\infty, [-1,0],[0,0])\ \text{ and }\ \tau_z=(\infty,[0,0],[z,0]). \]
\end{rmk}
The unique $z$ in \Cref{zprop} is defined to be the ${\tt Z}$-invariant of the ordered pair of adjacent real ideal triangles $(\tau_1, \tau_2)$ and denoted by ${\tt Z}(\tau_1, \tau_2)$. It can also be explicitly defined as follows:
\begin{defn}[${\tt Z}$-invariant]
    Let $\tau_1=(p_1,p_2,p_3)$ and $\tau_2=(p_3,p_4,p_1)$ be two adjacent real ideal triangles. Let $C_{13}$ be the unique complex line spanned by $p_1$ and $p_3$ with a polar vector $\textbf{v}$. Then the ${\tt Z}$-invariant of $(\tau_1,\tau_2)$ is defined to be \[ {\tt Z}(\tau_1, \tau_2):= -\frac{\langle \textbf{p}_4,\textbf{v} \rangle\langle \textbf{p}_2, \textbf{p}_1 \rangle }{ \langle \textbf{p}_2,\textbf{v} \rangle\langle \textbf{p}_4, \textbf{p}_1 \rangle} \] for any lift $\textbf{p}_i$ of $p_i$. Equivalently, \[ {\tt Z}(\tau_1, \tau_2):= -\frac{\langle \textbf{p}_4, \textbf{p}_1 \boxtimes \textbf{p}_3 \rangle\langle \textbf{p}_2, \textbf{p}_1 \rangle }{ \langle \textbf{p}_2, \textbf{p}_1 \boxtimes \textbf{p}_3 \rangle\langle \textbf{p}_4, \textbf{p}_1 \rangle} .\]
\end{defn}
\begin{rmk}
    It is straightforward to verify that ${\tt Z}(\tau_1, \tau_2)$ is invariant under the action of $\mathrm{PU}(2,1)$ and ${\tt Z}(\tau_1, \tau_2)=\overline{{\tt Z}(\tau_2, \tau_1)}$.
\end{rmk}
\begin{rmk}
    For the ordered pair $(\tau_0, \tau_z)$ in \Cref{zprop}, choosing the polar vector to be $\textbf{v} = (0,1,0)^t$ yields ${\tt Z}(\tau_0, \tau_z) = z$. This confirms the consistency of the explicit and geometric definitions of the ${\tt Z}$-invariant.
\end{rmk}
The following proposition plays a key role in the construction of the inverse map $\psi^{-1}$ in \Cref{willbijection}, as it guarantees the existence of a specific isometry in $\mathrm{PU}(2,1)$ associated with the ${\tt Z}$-invariant:
\begin{prop}\label{mzprop}\cite[Proposition 6]{WillBending}
    Let $\tau_1=(p_1,p_2,p_3)$ and $\tau_2=(p_3,p_4,p_1)$ be two adjacent real ideal triangles with ${\tt Z}(\tau_1, \tau_2)=z=xe^{i \alpha} \in \mathbb{C}\setminus\{-1,0\}$. Then there is a unique real symmetry $M_z$ such that $M_z(p_1,p_3)=(p_3,p_1)$ and $M_z(p_2,p_4)=(p_4,p_2)$ (see \Cref{mzflips}).
\end{prop}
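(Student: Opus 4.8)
The plan is to reduce to the normalized configuration supplied by \Cref{zprop} and then settle existence and uniqueness separately by elementary linear algebra in $\mathrm{U}(2,1)$. By \Cref{zprop} there is some $g\in\puto$ carrying the ordered pair $(\tau_1,\tau_2)$ to $(\tau_0,\tau_z)$; arranging the shared edge and the two free vertices to correspond, we may assume $g(p_1)=e_1$, $g(p_3)=e_3$, $g(p_2)=q$ and $g(p_4)=s$, where $e_1=(1,0,0)^t$, $e_3=(0,0,1)^t$, $q=(-1,-\sqrt2,1)^t$ and $s=(-|z|^2,z\sqrt2,1)^t$. Since conjugation by $g$ is an automorphism of $\puto$ preserving all the incidence data, it suffices to produce a unique $\tilde M_z\in\puto$ with $\tilde M_z(e_1)=e_3$, $\tilde M_z(e_3)=e_1$ and $\tilde M_z(q)=s$, and then set $M_z=g^{-1}\tilde M_z\,g$.

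For existence I would search for $\tilde M_z$ of the form $J_2 D$, where $J_2$ is the Hermitian matrix defining the working form (it already lies in $\mathrm{U}(2,1)$, interchanges $e_1\leftrightarrow e_3$, and fixes $e_2$) and $D=\mathrm{diag}(\lambda_1,\mu,\lambda_3)$. A direct check shows $D\in\mathrm{U}(2,1)$ exactly when $\lambda_1\bar\lambda_3=1$ and $|\mu|=1$, so any such $J_2 D$ automatically lies in $\mathrm{U}(2,1)$ and swaps the lines $[e_1],[e_3]$. Imposing the one remaining projective condition $\tilde M_z(q)=s$ then forces $\lambda_1=-c$, $\lambda_3=-c|z|^2$, $\mu=-cz$ for a common scalar $c$, and the two unitarity constraints hold precisely when $|c|=1/|z|$, which is possible since $z\neq0$. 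Thus such a $D$ exists (for instance $c=1/|z|$, giving $\mu=-e^{i\alpha}$), and the residual freedom in the phase of $c$ only rescales $\tilde M_z$ by a scalar, hence leaves its class in $\puto$ unchanged. This produces the desired $M_z=g^{-1}J_2 D\,g$.

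For uniqueness, suppose $M$ and $M'$ both satisfy the three conditions; then $h:=M^{-1}M'\in\puto$ fixes each of $p_1,p_2,p_3$, so after normalization it fixes the boundary points $e_1,e_3,q$ of the real ideal triangle $\tau_0$. Fixing the lines $[e_1]$ and $[e_3]$ forces $h$ to preserve their span together with its $J_2$-orthocomplement $\mathrm{span}(e_2)$, whence $h=\mathrm{diag}(\lambda_1,\mu,\lambda_3)$ with $\lambda_1\bar\lambda_3=1$ and $|\mu|=1$; the added requirement $h(q)\in[q]$ then forces $\lambda_1=\mu=\lambda_3$, so $h$ is scalar and hence trivial in $\puto$, giving $M=M'$. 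I expect the only genuinely delicate point to be this last triviality of the pointwise stabilizer: it relies crucially on $\tau_0$ being a \emph{real} ideal triangle (Cartan invariant $0$), and would in fact fail for three points on a common $\mathbb{C}$-circle, where the normal $U(1)$-rotations fix the entire complex line and yield a positive-dimensional stabilizer. Everything else reduces to routine verification once the configuration is put in standard form.
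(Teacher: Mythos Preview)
Your argument is correct. The reduction via \Cref{zprop} to the standard pair $(\tau_0,\tau_z)$ is legitimate, your ansatz $J_2D$ with $D$ diagonal produces an element of $\mathrm{U}(2,1)$ swapping $[e_1],[e_3]$ and sending $[q]$ to $[s]$ exactly under the stated constraint $|c|=1/|z|$, and the uniqueness step is sound: once $h$ fixes $[e_1]$ and $[e_3]$ the $\mathrm{U}(2,1)$ relation $h^*J_2h=J_2$ really does force $h$ to be diagonal, after which $h[q]=[q]$ collapses it to a scalar. Your remark on why this fails for three points on a $\mathbb{C}$-circle is also accurate.

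As for comparison: the paper does not give its own proof of this statement. It is quoted from \cite{WillBending} and the paper merely records the explicit representative
\[
M_{x,\alpha}=\begin{pmatrix}0&0&x\\0&e^{i\alpha}&0\\1/x&0&0\end{pmatrix},
\]
in the normalization of \Cref{zprop}. Your construction with the choice $c=-1/x$ (rather than $c=1/x$) reproduces this matrix on the nose; with $c=1/x$ you get $-M_{x,\alpha}$, which is the same class in $\puto$. So you have supplied both the existence \emph{and} the uniqueness that the paper takes for granted, and your output is consistent with the explicit matrix the paper uses downstream.
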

\begin{figure}[tph]
	\centering
	\begin{tikzpicture}
	\draw (-0.4,1.2)-- (1,2.5);
	\draw (-0.4,1.2)-- (1,0);
	\draw (1,2.5)-- (2.5,1.5);
	\draw (2.5,1.5)-- (1,0);
	\draw (1,2.5)-- (1,0);
	\draw (0.98,2.7) node{$p_1$};
	\draw (-.7,1.2) node{$p_2$};
	\draw (0.98,-0.25) node{$p_3$};
	\draw (2.8,1.4) node{$p_4$};
	\end{tikzpicture}
	\caption{$M_z$ \emph{flips} the vertices of the adjacent real ideal triangles.}
	\label{mzflips}
\end{figure} 
Considering $\tau_1$ and $\tau_2$ as in \Cref{zprop}, the matrix $M_z$ in the above proposition is given by \begin{equation}\label{mz}
    M_z=M_{x,\alpha}= \begin{pmatrix}
        0&0&x\\0&e^{i\alpha}&0\\1/x&0&0
    \end{pmatrix}.
\end{equation}
We can easily check that $M_z\overline{M_z}=\mathrm{Id}$.

The preceding theorem ensures that the ${\tt Z}$-invariants can indeed be used to realize a parametrization of the Isom($\HtC$)-classes of $T$-bent representations.
\begin{thm}\label{willbijection}\cite[Theorem 1]{WillBending}
    Let $T$ be an ideal triangulation on $S_{g,k}$. Then there exists a bijection \[\psi : \mathcal{BR}_T \rightarrow (\mathbb{C} \setminus \{-1,0\} )^{6g-6+3k}/\mathbb{Z}_2 \] where $\mathbb{Z}_2$ acts on $(\mathbb{C} \setminus \{-1,0\} )^{6g-6+3k}$ by conjugation.  
\end{thm}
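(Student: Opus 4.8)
The plan is to realize $\psi$ concretely as the assignment of ${\tt Z}$-invariants to the edges of $T$, and then to prove bijectivity by a developing argument on the universal cover. First I would verify the dimension count: the triangulation $T$ has $4g-4+2k$ ideal triangles, and since each edge is shared by exactly two of them there are $3(4g-4+2k)/2 = 6g-6+3k$ edges, matching the exponent of the target. Given a class $[(\rho,\phi)] \in \mathcal{BR}_T$, I would lift to the triangulation $\widetilde{T}$ on $\widetilde{S}_{g,k}$; each edge $e$ of $T$ lifts to an edge separating two adjacent triangles whose $\phi$-images are adjacent real ideal triangles $\tau_1,\tau_2$, and I would set the $e$-coordinate of $\psi$ to ${\tt Z}(\tau_1,\tau_2)$ as in \Cref{zprop}. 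The $\rho$-equivariance of $\phi$ together with the $\puto$-invariance of the ${\tt Z}$-invariant makes this independent of the chosen lift, so $\psi$ lands in $(\mathbb{C}\setminus\{-1,0\})^{6g-6+3k}$.

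Next I would pin down the role of the $\mathbb{Z}_2$. Conjugating $(\rho,\phi)$ by a holomorphic isometry leaves every ${\tt Z}$-invariant fixed, so $\psi$ already descends to $\puto$-classes. The remaining generator of $\mathrm{Isom}(\HtC)/\puto$ is complex conjugation; I would check that precomposing the framing with an antiholomorphic isometry conjugates each Hermitian product and hence sends ${\tt Z}(\tau_1,\tau_2)$ to $\overline{{\tt Z}(\tau_1,\tau_2)}$. This is exactly the conjugation action of $\mathbb{Z}_2$ on the target, so $\psi$ is well defined on the full $\mathrm{Isom}(\HtC)$-classes constituting $\mathcal{BR}_T$.

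For injectivity I would run a developing argument. Since $\widetilde{S}_{g,k}$ is a disk, the dual graph of $\widetilde{T}$ is a tree, so every triangle of $\widetilde{T}$ is reached from a fixed base triangle by a unique edge-path. Using the $\mathrm{Isom}(\HtC)$-action I normalize $\phi$ on the base triangle to the standard $\tau_0$ of \Cref{zprop}, the residual freedom being the conjugation that produces the $\mathbb{Z}_2$. Crossing each successive edge, the normal form of \Cref{zprop} shows that the ${\tt Z}$-invariant of that edge determines the $\phi$-image of the newly encountered vertex, since the flip isometry $M_z$ realizing the pair is unique by \Cref{mzprop}. Inducting along the tree, the whole framing $\phi$ is forced by the tuple of ${\tt Z}$-invariants up to the initial normalization, and by equivariance so is $\rho$; hence two classes with the same image under $\psi$ (modulo the conjugation $\mathbb{Z}_2$) coincide in $\mathcal{BR}_T$.

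For surjectivity I would reverse this construction: given $(z_e) \in (\mathbb{C}\setminus\{-1,0\})^{6g-6+3k}$, place the base triangle at $\tau_0$ and inductively define $\phi$ on every vertex of $\widetilde{T}$ by crossing edges and inserting the prescribed invariants via the normal form of \Cref{zprop}. Because the dual graph of $\widetilde{T}$ is a tree there is no monodromy obstruction, so $\phi:\mathcal{F}_\infty \to \partial\HtC$ is unambiguously defined and each triangle develops to a real ideal triangle. The main obstacle, and the step I would handle with the most care, is manufacturing the equivariant homomorphism $\rho$. For a deck transformation $\gamma$ and a triangle $\Delta$ of $\widetilde{T}$, the triangles $\phi(\Delta)$ and $\phi(\gamma\cdot\Delta)$ are both real ideal triangles and so share the Cartan invariant $0$; by the classification of ideal triangles there is an isometry matching them vertexwise, and it is unique among holomorphic isometries because the pointwise stabilizer of an ordered real ideal triangle is trivial (a holomorphic isometry fixing the three vertices must fix their real plane pointwise, forcing the identity). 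I would define $\rho(\gamma)$ to be this isometry, check that uniqueness makes it independent of $\Delta$ (the relative configuration of developed triangles being rigidly fixed by the ${\tt Z}$-invariants along the tree), and then deduce both $\rho(\gamma_1\gamma_2)=\rho(\gamma_1)\rho(\gamma_2)$ and the equivariance $\rho(\gamma)\circ\phi=\phi\circ\gamma$ directly from this uniqueness. Crucially, since $S_{g,k}$ is punctured its fundamental group is free, so no relators need be checked for consistency; this is precisely why the target is the full product rather than a subvariety cut out by closure conditions, and the construction yields a genuine $T$-bent representation with $\psi([(\rho,\phi)])=(z_e)$. Together these steps establish that $\psi$ is a bijection.
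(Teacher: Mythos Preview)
Your proposal is correct and follows essentially the same approach as the paper, which itself only sketches the construction of $\psi$ and $\psi^{-1}$ and refers to \cite{WillBending} for details: assigning ${\tt Z}$-invariants to edges for $\psi$, and developing the framing along the tree dual to $\widetilde{T}$ for $\psi^{-1}$. The only cosmetic difference is that the paper (following Will) writes $\rho(\gamma)$ explicitly as a product of the edge matrices $M_z$ and the face matrix $\mathcal{E}$ along the monodromy graph $\Gamma_T$, whereas you characterize $\rho(\gamma)$ abstractly as the unique holomorphic isometry carrying $\phi(\Delta)$ to $\phi(\gamma\cdot\Delta)$; these are equivalent formulations of the same construction.
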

Note that there are exactly $6g - 6 + 3k$ edges in an ideal triangulation on $S_{g,k}$. To define $\psi((\rho, \phi))$, the ${\tt Z}$-invariant is assigned to each edge of $T$, using the pair of adjacent real ideal triangles determined by the image of $\phi$. We refer to \cite{WillBending} for the precise construction and technical details. 

From a given tuple $\textbf{c}\in (\mathbb{C} \setminus \{-1,0\} )^{6g-6+3k}$, obtaining $(\rho,\phi)=\psi^{-1}(\textbf{c})$ is a standard technique, as discussed in many places, e.g., \cite[\S 5.2]{CTT}, \cite[\S 2.2]{BGup} (though in different setups), and in particular \cite[\S 4]{WillBending}. We are going to discuss it briefly here and again refer to \cite{WillBending} for the technical details. 

We first determine the framing map $\phi$. For this, we take a lift of the triangulation $T$ in the universal cover $\widetilde{S}_{g,k}$. Then we start with a pair of adjacent ideal triangles in $\widetilde{T}$. The common edge between them has a complex number assigned to it. We map the four ideal vertices involved so that the image of the vertices of each triangle lies on the boundary of some real plane in $\partial \HtC$, and the ${\tt Z}$-invariant of the resulting adjacent real ideal triangles equals the given complex number. This is possible by \Cref{zprop}. We then extend $\phi$ recursively by moving across adjacent triangles in the triangulation. This construction determines $\phi$ up to post-composition by an element of Isom($\HtC$).

To determine the representation $\rho$, we use the modified dual graph (also called the monodromy graph) $\Gamma_T$. To obtain it (see \Cref{monod}), we start with the dual graph of $T$ and then blow up each of its vertices into a small triangle entirely contained within the corresponding ideal triangle.
\begin{figure}[tph]
\centering
\begin{tikzpicture}[scale=0.6]
\draw (3,5) -- (0,0) -- (5,0) -- (3,5) -- (8,3) -- (5,0);
\draw [line width=0.5mm,   black] (3.4,2) -- (2.2,2.2) -- (2.8,1.2) -- (3.4,2) -- (4.8,2.5) --(6.1,2.3) -- (5.5,3.4) -- (4.8,2.5);
\draw [line width=0.5mm,   black] (0.8,3.2) -- (2.2,2.2);
\draw [line width=0.5mm,   black] (2.8,-0.8) -- (2.8,1.2);
\draw [line width=0.5mm,   black] (6.1,2.3) -- (7.3,1.3);
\draw [line width=0.5mm,   black] (5.5,3.4) -- (6,4.7);
\draw [line width=0.5mm,  red] plot [smooth, tension=0.4] coordinates { (0.4,3.5) (0.8,3.4) (2.2,2.4) (3.4,2.2) (4.8,2.3) (6.1,2.1) (7.1, 1.3)};
\node [red] at (0.4,3.9) {$\gamma$};
\node at (5,-0.4) {$T$};
\node at (6.4,4.74) {$\Gamma_T$};
\node at (3.3,1.5) {$t$};
\node at (2.5,0.5) {$e$};
\end{tikzpicture}
\setlength{\belowcaptionskip}{-8pt}
\caption{Part of the modified dual graph (bold).}
\label{monod}
\end{figure}

Notice that, there are two kinds of edges on $\Gamma_T$:
\begin{itemize}
    \item \emph{$e$-edges}: these are the edges that are transverse to $T$.
    \item \emph{$t$-edges}: these are the edges that lie entirely within an ideal triangle of $T$.
\end{itemize}
We consider the oriented lift $\Gamma_{\widetilde{T}}$ where the orientation is induced by the orientation of the surface. Then we assign the matrix $M(e)=M_z$ (see \Cref{mzprop} and \cref{mz}) with each of the oriented $e$-edges, where $z$ is the ${\tt Z}$-invariant of the two ordered adjacent real ideal triangles coming from the images of $\phi$. With each of the oriented $t$-edges, we assign the matrix \begin{equation}\label{epsi}
    M(t)=\mathcal{E}= \begin{pmatrix}
    -1 & \sqrt{2}& 1 \\ -\sqrt{2} & 1 & 0\\ 1&0&0
\end{pmatrix}\in \puto.
\end{equation} For the opposite orientation of a $t$-edge, we assign $\mathcal{E}^{-1}$. This is an elliptic element of order $3$ that cyclically permutes the 3 vertices of the real ideal triangle $\tau_0$ in \Cref{zprop}. Let $\gamma \in \pi_1(S_{g,k})$. We fix a lift of the base point. Then $\widetilde{\gamma}$ can be uniquely homotoped to the edges of $\Gamma_{\widetilde{T}}$ in such a way that the $e$-edges and the $t$-edges appear alternatively, beginning with a $t$-edge (see \Cref{monComp}).
\begin{figure}[tph]
	\centering
	\begin{tikzpicture}
	\draw (-0.4,1.2)-- (1,2.5);
	\draw (-0.4,1.2)-- (1,0);
	\draw (1,2.5)-- (2.5,1.5);
	\draw (2.5,1.5)-- (1,0);
	\draw (4.6,1.2)-- (6,2.5);
	\draw (6,2.5)-- (6,0);
	\draw (4.6,1.2)-- (6,0);
	\draw (1,2.5)-- (1,0);
	\draw (0.93,2.83) node[anchor=north west] {$a$};
	\draw (-.8,1.43) node[anchor=north west] {$b$};
	\draw (0.93,0.14) node[anchor=north west] {$c$};
	\draw (5.6,2.91) node[anchor=north west] {$\gamma \cdot a$};
	\draw (3.7,1.48) node[anchor=north west] {$\gamma \cdot b$};
	\draw (5.6,0.14) node[anchor=north west] {$\gamma \cdot c$};
	\draw (2.52,1.75) node[anchor=north west] {$d$};
	\draw (3.26,1.66) node[anchor=north west] {$\cdots$};
	\draw [line width=1.4pt] (0.02,1.98)-- (0.39,1.49);
	\draw [line width=1.4pt] (0.39,1.49)-- (0.42,0.95);
	\draw [line width=1.4pt] (0.42,0.95)-- (0.75,1.22);
	\draw [line width=1.4pt] (0.75,1.22)-- (0.39,1.49);
	\draw [line width=1.4pt] (0.42,0.95)-- (0.09,0.45);
	\draw [line width=1.4pt] (0.75,1.22)-- (1.32,1.24);
	\draw [line width=1.4pt] (1.32,1.24)-- (1.67,1.69);
	\draw [line width=1.4pt] (1.67,1.69)-- (1.77,1.16);
	\draw [line width=1.4pt] (1.77,1.16)-- (1.32,1.24);
	\draw [line width=1.4pt] (1.77,1.16)-- (2.18,0.78);
	\draw [line width=1.4pt] (5.02,1.98)-- (5.39,1.49);
	\draw [line width=1.4pt] (5.75,1.22)-- (5.39,1.49);
	\draw [line width=1.4pt] (5.39,1.49)-- (5.42,0.95);
	\draw [line width=1.4pt] (5.42,0.95)-- (5.75,1.22);
	\draw [line width=1.4pt] (5.42,0.95)-- (5.09,0.45);
	\draw [line width=1.4pt] (5.75,1.22)-- (6.32,1.24);
	\draw [line width=1.4pt] (1.67,1.69)-- (1.93,2.23);
	\draw (0,1.41) node[anchor=north west] {$t$};
	\draw (.9,1.56) node[anchor=north west] {$e$};
	\draw [line width=1.1pt,  red] plot [smooth, tension=.6] coordinates { (0.52, 0.78) (0.78, 0.99) (1.29, 1.03) (1.77, 0.85) (2.12, 0.65) (2.96, 0.29) (4.17, 0.16) (4.86, 0.41) (5.3, 0.9)};
	\draw [color=red](3.45,0.2) node[anchor=north west] {$\widetilde\gamma$};
	\end{tikzpicture}
	\caption[Computing monodromy of a curve]{Computing $\rho(\gamma)$.}
	\label{monComp}
\end{figure} 
Let $\widetilde{\gamma} \sim t_1*e_1*\cdots *t_r*e_r$. If certain $t_i$ are oriented in the direction opposite to $\widetilde{\gamma}$, we have $M(t_i)=\mathcal{E}^{-1}$. Then $\rho(\gamma)$ belongs to the conjugacy class of $ M(e_r)M(t_r)\cdots M(e_1)M(t_1)=M_{z_r}\mathcal{E}^{\delta_r}\cdots M_{z_1}\mathcal{E}^{\delta_1}$ where $\delta_i=\pm1$ (see \cite[Proposition 9]{WillBending}).
\begin{rmk}
    Note that this process determines $\rho(\gamma)$ only up to conjugacy. Since eigenvalues are invariant under conjugation, determining $\rho(\gamma)$ up to conjugacy is sufficient for our proof. With a slight abuse of notation, we shall therefore write
    \[
        \rho(\gamma) =M_{z_r}\mathcal{E}^{\delta_r}\cdots M_{z_1}\mathcal{E}^{\delta_1}.
    \]
\end{rmk}

A triangulation on a surface $S_{g,k}$ is called \emph{bipartite} if its dual graph is bipartite. From now on, we shall only consider the bipartite ideal triangulations on the punctured surface $S_{g,k}$. This is necessary because, by \cite[Proposition 10]{WillBending}, the associated isometries lie in $\puto$ if and only if the triangulation is bipartite. The following result ensures that such triangulations always exist:
\begin{prop}\cite[Proposition 11]{WillBending}
    Every oriented punctured surface of negative Euler characteristic admits a bipartite ideal triangulation.
\end{prop}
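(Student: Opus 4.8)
The plan is to reformulate bipartiteness of the dual graph in the language of ribbon graphs (fatgraphs), where it becomes a clean combinatorial statement, and then to build the required triangulations by surgery from two elementary models. First I would recall the standard duality between ideal triangulations and trivalent ribbon graphs. An ideal triangulation $T$ of $S_{g,k}$ is dual to a trivalent ribbon graph (spine) $\Gamma \subset S_{g,k}$: the vertices of $\Gamma$ are the triangles of $T$ (each trivalent, since a triangle has three sides), the edges of $\Gamma$ are dual to the edges of $T$, and the complementary faces of $\Gamma$ are once-punctured disks, one per puncture. In particular, the dual graph of $T$ used in the construction above is exactly the $1$-skeleton of $\Gamma$, so $T$ is bipartite precisely when $\Gamma$ is a bipartite graph. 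Thus it suffices to prove: for every $g \ge 0$ and $k \ge 1$ with $2-2g-k<0$, there is a \emph{bipartite} trivalent ribbon graph whose thickening (regular neighborhood) is $S_{g,k}$, i.e. has genus $g$ and $k$ boundary components.

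\emph{Base cases.} The theta graph $\Theta$ --- two vertices joined by three edges --- is trivalent and bipartite (color its two vertices black and white). Its two inequivalent cyclic (ribbon) structures realize exactly the two surfaces of complexity $-\chi=1$: the planar ribbon structure thickens to $S_{0,3}$, while reversing the cyclic order at one vertex thickens to $S_{1,1}$. These serve as the base cases.

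\emph{Inductive move.} I would then use a two-vertex insertion move adding one black vertex $B$ and one white vertex $W$. Choose two edges $b_1w_1$ and $b_2w_2$ of $\Gamma$ (each joining a black to a white vertex, as it must in a bipartite graph), delete them, and add the five edges $Bw_1,\, Bw_2,\, BW,\, Wb_1,\, Wb_2$. A direct check shows every vertex stays trivalent and the bipartition is preserved: $B$ is joined only to the white vertices $w_1,w_2,W$, and $W$ only to the black vertices $b_1,b_2,B$, while $b_i$ (resp. $w_i$) merely trades its neighbor $w_i$ (resp. $b_i$) for $W$ (resp. $B$). The move adds $2$ vertices and, net, $3$ edges, so it raises $-\chi$ by $1$, i.e. $2g+k$ by $1$. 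By choosing the two edges to lie on the same face or on different faces of $\Gamma$, and inserting the new edge-ends appropriately into the cyclic orders, the move realizes either $(\Delta g,\Delta k)=(0,+1)$ (splitting a boundary component, adding a puncture) or $(\Delta g,\Delta k)=(+1,-1)$ (adding a handle). Starting from the two base cases and composing these, one reaches every admissible pair: the move $(0,+1)$ raises $k$, while $g$ applications of $(+1,-1)$ interleaved with enough $(0,+1)$ moves (to keep $k\ge 1$) reach any prescribed genus, the single exceptional pair $(1,1)$ being itself a base case.

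\emph{Main obstacle.} The conceptual steps are routine; the real content --- and the point I would be most careful about --- is the ribbon-graph bookkeeping inside the inductive move: verifying by an explicit boundary-tracing that the new cyclic orders can indeed be arranged to produce \emph{either} an additional boundary component \emph{or} an additional handle, and confirming that $\Gamma$ stays connected and remains a genuine spine, so that its dual is an honest ideal triangulation. This is a finite local computation at the inserted vertices, but it is exactly where the statement has genuine force.
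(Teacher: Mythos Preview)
The paper does not actually prove this proposition: it is quoted from \cite{WillBending} with no argument given, so there is no proof in the present paper against which to compare your attempt.

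That said, your outline is sound and would yield a correct proof. The translation to trivalent ribbon graphs is the right dictionary, the two theta-graph base cases are correct, and your two-vertex insertion genuinely preserves both trivalence and the bipartition (the colouring check is immediate from your edge list). The step you flag as the ``main obstacle'' --- arranging the cyclic orders at $B$ and $W$ so that the boundary-word count changes by $+1$ or $-1$ as desired --- is indeed the only point with real content, but it is a finite local computation and goes through. Two small things worth making explicit when you write it up: the move still makes sense when $b_1w_1$ and $b_2w_2$ share endpoints (as is forced in the theta graph, where multi-edges appear), and connectivity is automatic because each deleted edge $b_iw_i$ is replaced by the path $b_i\,$--$\,W\,$--$\,B\,$--$\,w_i$, so no bridge is lost.
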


\section{Proof of the theorem}
\begin{notation}
Throughout this paper, we view a representation 
$\rho: \pi_1(S_{g,k}) \to \mathrm{PU}(2,1)$ as a map into the projective unitary group. Technically, for each $\gamma$, $\rho(\gamma)$ is an equivalence class of matrices in $\mathrm{U}(2,1)$ modulo multiplication by scalars of modulus one. However, whenever convenient we choose a representative $\tilde\rho(\gamma) \in \mathrm{U}(2,1)$ and write expressions such as
\[
\rho(\gamma) = M_{z_r}\mathcal{E}^{\delta_r}\cdots M_{z_1}\mathcal{E}^{\delta_1},
\]
meaning that $\tilde\rho(\gamma) = M_{z_r}\mathcal{E}^{\delta_r}\cdots M_{z_1}\mathcal{E}^{\delta_1}$ for some choice of lift. All quantities we use (e.g.\ spectral radius $\sigma$, Bergman translation length $\ell = 2\ln\sigma$, and $|\mathrm{tr}|$) are invariant under multiplication by a unit scalar, so this convention does not affect any statement or proof.
\end{notation}

We now proceed to the proof of the main theorem. Let $\rho:\pi_1(S_{g,k}) \rightarrow \puto$ be a given generic representation, and $\gamma \in \pi_1(S_{g,k})$. Let us fix an ideal triangulation $T$ on $S_{g,k}$. Let \[\mathcal{Z}=\{z_j\}_{j=1}^{3(2g+k-2)}\] be the set of all edge invariants associated with the edges of $T$. Let $z_j=x_je^{i\alpha_j}$, where $\alpha_j\in [0,2\pi)$ for all $j$. Let us denote 
\[ \mathcal{X}=\{x_j\}_{j=1}^{3(2g+k-2)} \subset \mathbb{R}^+ \text{ and } \Theta = \{\alpha_j\}_{j=1}^{3(2g+k-2)}. \] 
We know that 
\[ \rho(\gamma) = M_{z_r}\mathcal{E}^{\delta_r}\cdots M_{z_1}\mathcal{E}^{\delta_1}  \] 
where $z_i$ are the edge-invariants, $\delta_i \in \{\pm 1\}$ and
\begin{equation}\label{elem}
    \mathcal{E}= \begin{pmatrix}
    -1 & \sqrt{2}& 1 \\ -\sqrt{2} & 1 & 0\\ 1&0&0
\end{pmatrix};\ \ M_{z_j}= \begin{pmatrix}
    0&0&x_j \\ 0&e^{i\alpha_j}&0 \\ 1/x_j & 0&0
\end{pmatrix}
\end{equation}
for $z_j=x_je^{i\alpha_j}$.

To prove the next few results we shall define a \emph{positive expression} in the following way:
\begin{defn}\label{px}(Positive expression)
    An expression $P$ is termed a \emph{positive expression} if it is of the form 
    \[ P= \sum_{j\in J} c_j X_j e^{i\theta_j}\]
    where $J$ is a finite index set, $\{c_j\}_{j\in J} \subseteq \mathbb{R}^+$, 
    \[X_j=\frac{x_{i_1}x_{i_2}\cdots x_{i_q}}{x_{j_1}x_{j_2}\cdots x_{j_m}} \]
    for $\{x_{i_1}, x_{i_2},\cdots ,x_{i_q},x_{j_1},\cdots ,x_{j_m} \} \subseteq \mathcal{X}$ and \[ \theta_j=\sum_{j\in J_j}\alpha_j \] for all $\alpha_j \in \Theta$ and some finite index set $J_j$.
\end{defn}
\begin{rmk}
    Note that a ``positive expression" can still be a complex number. Here, we treat the invariants $x_j$ and $\alpha_j$ as formal symbols, allowing us to apply the triangle inequality for complex numbers later. So the term ``positive" in \emph{positive expression} refers exclusively to the positivity of the coefficients $c_j$ and independent of the values of $e^{i\theta_j}$. That is, we do not evaluate the values of $e^{i\theta_j}$ to determine whether an expression is positive. For instance, if $\pi \in \Theta$, then $e^{i\pi}$ is a positive expression, although $e^{i\pi}=-1$. 
\end{rmk}
\begin{example}
    The anti-diagonal elements of the matrix $M_{z_j}$ in \cref{elem} are all positive expressions.
\end{example}
\begin{notation}
    We denote a positive expression by $p_j$ for some $j$. Let $p_j(0)$ denote the expression $p_j$ with all $\alpha_j$ replaced by 0.
\end{notation}
The following proposition is straightforward, and we leave the proof to the reader.
\begin{prop}
    Let $p_j$ be a positive expression and $c > 0$. Then
    \begin{enumerate}
        \item $cp_j$ is also a positive expression.
        
        \item Finite sum of positive expressions is a positive expression. 

        \item Finite product of positive expressions is also a positive expression.
    \end{enumerate} 
\end{prop}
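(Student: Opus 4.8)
The plan is to verify the three closure properties by reading off the structure of \Cref{px} directly; no analytic input is required, since everything reduces to formal bookkeeping in the symbols $x_j$ and $\alpha_j$. Recall that a positive expression is characterized by three features: the scalar coefficients lie in $\mathbb{R}^+$, each factor $X_j$ is a ratio of products of variables from $\mathcal{X}$, and each exponent $\theta_j$ is a finite sum of elements of $\Theta$. The strategy is to check, in each of the three cases, that the stated operation preserves all three features. For part (1), writing $p_j = \sum_{j \in J} c_j X_j e^{i\theta_j}$, I would simply observe that $c p_j = \sum_{j \in J} (c c_j)\, X_j\, e^{i\theta_j}$, where each new coefficient $c c_j$ is again a positive real while the $X_j$ and $\theta_j$ are left untouched; hence $c p_j$ satisfies the definition verbatim.

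For part (2), it suffices to treat the sum of two positive expressions $P = \sum_{j \in J} c_j X_j e^{i\theta_j}$ and $Q = \sum_{k \in K} d_k Y_k e^{i\phi_k}$, the general finite sum then following by induction on the number of summands. Reindexing over the disjoint union $J \sqcup K$, the sum $P + Q$ becomes a single finite sum whose coefficients are drawn from $\{c_j\} \cup \{d_k\} \subset \mathbb{R}^+$, whose monomial parts are among the $X_j$ and $Y_k$, and whose exponents are among the $\theta_j$ and $\phi_k$. This is exactly the form prescribed by \Cref{px}.

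For part (3), the substantive case is again the product of two positive expressions, after which a straightforward induction handles an arbitrary finite product. Expanding by distributivity yields
\[ PQ = \sum_{(j,k) \in J \times K} (c_j d_k)\, (X_j Y_k)\, e^{i(\theta_j + \phi_k)}, \]
a finite sum indexed by $J \times K$. I would then check the three features one term at a time: each coefficient $c_j d_k$ is a product of two positive reals and hence positive; each $X_j Y_k$ is a product of two ratios of products of variables from $\mathcal{X}$, so its numerator (resp.\ denominator) is the product of the two numerators (resp.\ denominators) and remains a product of variables from $\mathcal{X}$, i.e.\ again of the required ratio form; and each exponent $\theta_j + \phi_k$ is a sum of two finite sums of elements of $\Theta$, hence itself a finite sum of elements of $\Theta$. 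Thus $PQ$ meets \Cref{px}.

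The argument is routine and there is no genuine obstacle; the only point demanding a little care is the bookkeeping in (3), namely confirming that $X_j Y_k$ still has the ratio-of-monomials shape of \Cref{px}, where one should allow repeated factors (that is, positive powers of a single $x_i$), and that concatenating the two index sets defining $\theta_j$ and $\phi_k$ produces an admissible exponent. Once the two-factor cases of (2) and (3) are in hand, the general finite-sum and finite-product statements follow by an immediate induction on the number of factors.
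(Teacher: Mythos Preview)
Your proof is correct. The paper itself states this proposition without proof, treating the three closure properties as immediate consequences of \Cref{px}; your writeup simply spells out the routine verification that the paper leaves implicit.
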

\begin{lem}\label{ineq}
    Let \[ p_t= \sum_{j\in J} c_j X_j e^{i\theta_j}\] be a positive expression for some $J,c_j,X_j$ and $\theta_j$ as in \Cref{px}. Let $p_t(0)$ denote the positive expression $p_t$ with all its $\theta_j$ replaces by $0$. Then \[ | p_t| \le \sum_{j\in J} c_j X_j = p_t(0). \]
\end{lem}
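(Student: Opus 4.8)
The plan is to reduce the statement to the ordinary triangle inequality for complex numbers, after recording the transparent polar decomposition of each summand. First I would observe that every term of $p_t$ splits as a positive real scalar times a unit-modulus phase: since $c_j \in \mathbb{R}^+$ by \Cref{px} and $X_j = x_{i_1}\cdots x_{i_q}/(x_{j_1}\cdots x_{j_m})$ is a finite product and quotient of elements of $\mathcal{X} \subset \mathbb{R}^+$, the quantity $c_j X_j$ is a positive real number; and because each $\theta_j = \sum_{\ell \in J_j} \alpha_\ell$ is real, we have $|e^{i\theta_j}| = 1$. Hence $|c_j X_j e^{i\theta_j}| = c_j X_j$ for every $j \in J$.

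With this in hand, the triangle inequality delivers the bound directly:
\[ |p_t| = \Bigl| \sum_{j\in J} c_j X_j e^{i\theta_j} \Bigr| \le \sum_{j\in J} \bigl| c_j X_j e^{i\theta_j} \bigr| = \sum_{j \in J} c_j X_j. \]
It then remains only to identify the right-hand side with $p_t(0)$. By the notation convention, $p_t(0)$ is obtained from $p_t$ by replacing every $\alpha_\ell$ with $0$; this turns each $\theta_j$ into $0$ and each phase $e^{i\theta_j}$ into $1$, so that $p_t(0) = \sum_{j\in J} c_j X_j$. Combining the two displays gives $|p_t| \le p_t(0)$, as desired.

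The argument carries essentially no computational obstacle; the only point requiring care is interpretive rather than technical. As emphasized in the remark following \Cref{px}, the symbols $x_\ell$ and $\alpha_\ell$ are treated formally, so the inequality must be read as holding for every admissible assignment of values $x_\ell \in \mathbb{R}^+$ and $\alpha_\ell \in \mathbb{R}$; under any such assignment the estimate is exactly the triangle inequality applied to the above decomposition. The substantive content is thus not the proof itself but the structural observation that $p_t(0)$ is a genuinely positive real expression bounding $|p_t|$ from above, which is precisely the feature that will be exploited when these bounds are fed into a comparison of spectral radii (and hence of Bergman translation lengths) in the proof of \Cref{mainthm1}.
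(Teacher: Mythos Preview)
Your proof is correct and follows exactly the same approach as the paper, which simply notes that the inequality is immediate from the triangle inequality for complex numbers since each $c_j$ and $X_j$ is real and positive. Your write-up is somewhat more detailed (explicitly checking $|e^{i\theta_j}|=1$ and unpacking why the right-hand side equals $p_t(0)$), but there is no methodological difference.
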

\begin{proof}
    It follows immediately from the triangle inequality of complex numbers, since $c_j$ and $X_j$ are all real and positive.
\end{proof}
\begin{prop}\label{mainprop}
    Let $\gamma \in \pi_1(S_{g,k})$. Then $\rho(\gamma) \in \puto$ has a matrix representative of the form 
    \begin{itemize}
        \item $\begin{pmatrix}
            p_1 & -p_2 & -p_3\\ -p_4 & p_5 & p_6 \\ -p_7 & p_8 & p_9
        \end{pmatrix}$ when $\gamma$ is non-peripheral, and \\
        \item  $\begin{pmatrix}
            p_1 & 0 & 0\\ -p_4 & p_5 & 0 \\ -p_7 & p_8 & p_9
        \end{pmatrix}$ or,   $\begin{pmatrix}
            p_1 & -p_2 & -p_3\\ 0 & p_5 & p_6 \\ 0 & 0 & p_9
        \end{pmatrix}$,  when $\gamma$ is peripheral
    \end{itemize}
    for some positive expressions $p_i$.
\end{prop}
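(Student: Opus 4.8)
The plan is to induct on the number of elementary blocks in the factorization $\rho(\gamma) = M_{z_r}\mathcal{E}^{\delta_r}\cdots M_{z_1}\mathcal{E}^{\delta_1}$, grouping the product into blocks $B_j := M_{z_j}\mathcal{E}^{\delta_j}$ with $\delta_j \in \{\pm 1\}$. The real content of the statement is the \emph{pattern of signs} of the entries (so that each nonzero entry is $\pm$ a positive expression), not the magnitudes. I would therefore reduce the whole proposition to a bookkeeping statement about three sign patterns and the fact that they multiply without sign cancellation.

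First I would compute the two possible blocks explicitly. Since $\mathcal{E}$ has order $3$, we have $\mathcal{E}^{-1}=\mathcal{E}^2$, and a direct multiplication gives
\[
M_z\mathcal{E} = \begin{pmatrix} x & 0 & 0 \\ -\sqrt{2}\,e^{i\alpha} & e^{i\alpha} & 0 \\ -1/x & \sqrt{2}/x & 1/x \end{pmatrix},
\qquad
M_z\mathcal{E}^{-1} \simeq \begin{pmatrix} x & -\sqrt{2}\,x & -x \\ 0 & e^{i\alpha} & \sqrt{2}\,e^{i\alpha} \\ 0 & 0 & 1/x \end{pmatrix},
\]
where $\simeq$ denotes equality of the $\puto$-representative up to the scalar $-1$. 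Each entry is, up to an explicit sign, a positive expression in the sense of \Cref{px}. Writing $S_L$ and $S_U$ for these two (lower- and upper-triangular) sign patterns and $S_F$ for the full non-peripheral pattern of the statement, the block $B_j$ realizes $S_L$ when $\delta_j=1$ and $S_U$ when $\delta_j=-1$. Note already that a single block is exactly one of the two peripheral forms in the statement.

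The heart of the argument is the claim that $\{S_L,S_U,S_F\}$ is closed under multiplication \emph{with no sign cancellation}: in each entry of each product, all contributing summands carry the same sign, so after factoring out that sign what remains is a finite sum of finite products of positive expressions, hence again a positive expression by the preceding proposition (and then \Cref{ineq} is available for the later length estimates). I would verify the finitely many products $S_LS_L=S_L$, $S_US_U=S_U$, $S_LS_U=S_US_L=S_F$, and $S_LS_F=S_US_F=S_FS_L=S_FS_U=S_FS_F=S_F$, checking in each case that no $(+)(-)$ cross-term occurs in any of the nine entries; this coherence is precisely what prevents the positivity structure from collapsing. The induction then runs on the partial products $P_k=B_kP_{k-1}$: the base case $P_1=B_1$ is $S_L$ or $S_U$, and each left-multiplication by $B_k$ keeps us inside $\{S_L,S_U,S_F\}$, with $S_F$ absorbing.

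Finally I would separate the two cases by the combinatorics of the path. A word all of whose blocks have the same type stays triangular throughout (either $S_L$ or $S_U$), whereas as soon as blocks of both types occur the pattern becomes $S_F$ and remains there. For the peripheral case I would use that a loop encircling a puncture winds monotonically through the cycle of triangles in the link of that ideal vertex, so the $t$-edges of $\widetilde{\gamma}$ are all traversed with the same orientation; hence all $\delta_j$ coincide, $\rho(\gamma)$ is a pure block word, and it has one of the two triangular forms. I expect the main obstacle to be the sign-coherence verification—confirming that every entry of each relevant matrix product is free of sign-mixing—together with making precise the claim that a peripheral loop yields a uniformly oriented, hence pure, block word; the non-peripheral form then follows since any type change pushes the pattern into the absorbing state $S_F$.
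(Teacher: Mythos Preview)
Your proposal is correct and follows essentially the same approach as the paper: compute the two elementary blocks $M_z\mathcal{E}$ and $M_z\mathcal{E}^{-1}$, observe their triangular sign patterns, and verify that multiplication by further blocks never produces sign cancellation in any entry. The paper carries out the same verification by explicit matrix products (pre- and post-multiplying $M_{z_1}\mathcal{E}M_{z_2}\mathcal{E}^{-1}$ by each block type), whereas you package it as closure of the set $\{S_L,S_U,S_F\}$ under left multiplication with $S_F$ absorbing; the two arguments are the same computation in different dress, and your formulation is arguably tidier. Two minor remarks: the ``$\simeq$ up to $-1$'' in your $M_z\mathcal{E}^{-1}$ is unnecessary, since with $\mathcal{E}^{-1}=-\mathcal{E}^2$ the displayed matrix is an exact equality; and, as you anticipated, the assertion that a non-peripheral word must contain blocks of both types (equivalently, that a pure block word is necessarily peripheral) is used by the paper just as you use it, without further justification.
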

\begin{proof}
    We have seen that the building blocks of $\rho(\gamma)$ (up to conjugacy)  are
    \begin{equation}\label{me}
        M_{z_1}\mathcal{E} = 
        \begin{pmatrix}
            x_1 & 0&0\\
            -\sqrt{2}e^{i\alpha_1} & e^{i\alpha_1} & 0\\
            -\frac{1}{x} & \frac{\sqrt{2}}{x} & \frac{1}{x}
        \end{pmatrix} = 
        \begin{pmatrix}
            p_1 & 0&0\\
            -p_2 & p_3 & 0 \\
            -p_4 & p_5 & p_6
        \end{pmatrix} 
    \end{equation} and
    \begin{equation}\label{mei}
        M_{z_2}\mathcal{E}^{-1}=
        \begin{pmatrix}
            x_2&-\sqrt{2}x_2 & -x_2\\
            0& e^{i\alpha_2} & \sqrt{2}e^{i\alpha_2} \\
            0&0&\frac{1}{x_2}
        \end{pmatrix}=
        \begin{pmatrix}
            p_7 & -p_8 & -p_9 \\
            0& p_{10} & p_{11}\\
            0&0& p_{12}
        \end{pmatrix}
    \end{equation} for some positive expressions $p_{j}$.\\
    \textbf{Case I:} When $\gamma$ is a non-peripheral loop: 
    We know that $\rho(\gamma)$ contains a block of the form $M_{z_1}\mathcal{E}M_{z_2}\mathcal{E}^{-1}$ or $M_{z_2}\mathcal{E}^{-1}M_{z_1}\mathcal{E}$ for some $z_1, z_2 \in \mathcal{Z}$. We see that
    \begin{equation}\label{memei}
    \begin{split}
        M_{z_1}\mathcal{E}M_{z_2}\mathcal{E}^{-1} &= 
    \begin{pmatrix}
        x_1x_2 & -\sqrt{2}x_1x_2 & -x_1x_2 \\
        -\sqrt{2}x_2e^{i\alpha_1} & 2x_2e^{i\alpha_1}+e^{i(\alpha_1+ \alpha_2)} & \sqrt{2}x_2e^{i\alpha_1}+\sqrt{2}e^{i(\alpha_1+ \alpha_2)} \\
        -\frac{x_2}{x_1} & \sqrt{2}\frac{x_2}{x_1}+\frac{\sqrt{2}}{x_1}e^{i\alpha_2} & \frac{x_2}{x_1}+\frac{2}{x_1}e^{i\alpha_2}+\frac{1}{x_1x_2}
    \end{pmatrix} \\
    &= \begin{pmatrix}
        p_{j_1} & -p_{j_2} & -p_{j_3} \\
        -p_{j_4} & p_{j_5} & p_{j_6} \\
        -p_{j_7} & p_{j_8} & p_{j_9}
    \end{pmatrix}
    \end{split}
    \end{equation} 
    for some positive expressions $p_{j_i}$, and 
    \begin{equation}\label{meime}
    \begin{split}
         M_{z_2}\mathcal{E}^{-1}M_{z_1}\mathcal{E} &=
         \begin{pmatrix}
             x_1x_2 + 2x_2e^{i\alpha_1}+ \frac{x_2}{x_1} & -\sqrt{2}x_2e^{i\alpha_1}-\sqrt{2}\frac{y}{x} & -\frac{x_2}{x_1}  \\ -\sqrt{2}e^{i(\alpha_1 +\alpha_2)}-\frac{\sqrt{2}}{x}e^{i\alpha_2} & e^{i(\alpha_1 +\alpha_2)}+\frac{2}{x}e^{i\alpha_2} & \frac{\sqrt{2}}{x}e^{i\alpha_2}\\
             -\frac{1}{x_1x_2}& \frac{\sqrt{2}}{x_1x_2} & \frac{1}{x_1x_2}
         \end{pmatrix} \\
         &= \begin{pmatrix}
             p_{k_1} & -p_{k_2} & -p_{k_3} \\
             -p_{k_4} & p_{k_5} & p_{k_6} \\
             -p_{k_7} & p_{k_8} & p_{k_9}
         \end{pmatrix}
    \end{split}
    \end{equation} for some positive expressions $p_{k_i}$. If $\rho(\gamma)=M_{z_1}\mathcal{E}M_{z_2}\mathcal{E}^{-1}$ or $\rho(\gamma)=M_{z_2}\mathcal{E}^{-1}M_{z_1}\mathcal{E}$, then the result is proved from the above two equations. If not, then we shall show that pre-multiplying and post-multiplying $M_{z_1}\mathcal{E}M_{z_2}\mathcal{E}^{-1}$ and $M_{z_2}\mathcal{E}^{-1}M_{z_1}\mathcal{E}$ with a building block of the form $M_{z_3}\mathcal{E}$ or $M_{z_3}\mathcal{E}^{-1}$ does not change the entries of the matrices in terms of being a positive expression. 
    
    Suppose $\rho(\gamma)$ contains a block of the form $M_{z_1}\mathcal{E}M_{z_2}\mathcal{E}^{-1}$. Then pre-multiplying it with a building block of the form $M_{z_3}\mathcal{E}$ yields 
    \begin{equation}
    \begin{split}
        M_{z_1}\mathcal{E}&M_{z_2}\mathcal{E}^{-1} \cdot M_{z_3}\mathcal{E} 
        =\begin{pmatrix}
             p_{k_1} & -p_{k_2} & -p_{k_3} \\
             -p_{k_4} & p_{k_5} & p_{k_6} \\
             -p_{k_7} & p_{k_8} & p_{k_9}
         \end{pmatrix} 
         \begin{pmatrix}
            p_{j_{10}} & 0&0\\
            -p_{j_{11}} & p_{j_{12}} & 0 \\
            -p_{j_{13}} & p_{j_{14}} & p_{j_{15}}
        \end{pmatrix} \\ 
        &= \begin{pmatrix}
            p_{k_1}p_{j_{10}}+ p_{k_2}p_{j_{11}} + p_{k_3}p_{j_{13}} & -p_{k_2}p_{j_{12}}- p_{k_3}p_{j_{14}} & -p_{k_3}p_{j_{15}} \\
            -p_{k_4}p_{j_{10}}- p_{k_5}p_{j_{11}} - p_{k_6}p_{j_{13}} & p_{k_5}p_{j_{12}}+ p_{k_6}p_{j_{14}} & p_{k_6}p_{j_{15}} \\
            -p_{k_7}p_{j_{10}}- p_{k_8}p_{j_{11}} - p_{k_9}p_{j_{13}} &  p_{k_8}p_{j_{12}}+p_{k_9}p_{j_{14}} & p_{k_9}p_{j_{15}}
        \end{pmatrix} \\
        &= \begin{pmatrix}
            p_{j_{16}} & -p_{j_{17}} & -p_{j_{18}} \\ -p_{j_{19}} & p_{j_{20}} & p_{j_{21}} \\ -p_{j_{22}} & p_{j_{23}} & p_{j_{24}}
        \end{pmatrix}
    \end{split}
    \end{equation}
    for some positive expression $p_{j_i}$, which is of same type as that of $M_{z_1}\mathcal{E}M_{z_2}\mathcal{E}^{-1}$ in terms of the entries being a positive expression (see \cref{memei}). Also, pre-multiplying $M_{z_1}\mathcal{E}M_{z_2}\mathcal{E}^{-1}$ with a building block of the form $M_{z_3}\mathcal{E}^{-1}$ yields 
    \begin{equation}
    \begin{split}
       M_{z_1}\mathcal{E}&M_{z_2}\mathcal{E}^{-1} \cdot M_{z_3}\mathcal{E}^{-1} 
            =\begin{pmatrix}
             p_{k_1} & -p_{k_2} & -p_{k_3} \\
             -p_{k_4} & p_{k_5} & p_{k_6} \\
             -p_{k_7} & p_{k_8} & p_{k_9}
         \end{pmatrix}
         \begin{pmatrix}
            p_{j_{25}} & -p_{j_{26}} & -p_{j_{27}} \\
            0& p_{j_{28}} & p_{j_{29}}\\
            0&0& p_{j_{30}}
        \end{pmatrix} \\
    &= \begin{pmatrix}
     p_{j_{25}} p_{k_{1}} & -p_{j_{26}} p_{k_{1}} - p_{j_{28}} p_{k_{2}} & -p_{j_{27}} p_{k_{1}} - p_{j_{29}} p_{k_{2}} - p_{j_{30}} p_{k_{3}} \\
    -p_{j_{25}} p_{k_{4}} & p_{j_{26}} p_{k_{4}} + p_{j_{28}} p_{k_{5}} & p_{j_{27}} p_{k_{4}} + p_{j_{29}} p_{k_{5}} + p_{j_{30}} p_{k_{6}} \\
    -p_{j_{25}} p_{k_{7}} & p_{j_{26}} p_{k_{7}} + p_{j_{28}} p_{k_{8}} & p_{j_{27}} p_{k_{7}} + p_{j_{29}} p_{k_{8}} + p_{j_{30}} p_{k_{9}}
    \end{pmatrix} \\
    &= \begin{pmatrix}
            p_{j_{31}} & -p_{j_{32}} & -p_{j_{33}} \\ -p_{j_{34}} & p_{j_{35}} & p_{j_{36}} \\ -p_{j_{37}} & p_{j_{38}} & p_{j_{39}}
        \end{pmatrix}
    \end{split}
\end{equation} for some positive expression $p_{j_i}$, which is also of same type as that of $M_{z_1}\mathcal{E}M_{z_2}\mathcal{E}^{-1}$ in terms of the entries being a positive expression (see \cref{memei}). Similarly, post-multiplying $M_{z_1}\mathcal{E}M_{z_2}\mathcal{E}^{-1}$ with the building blocks, we get 
    \begin{equation}
    \begin{split}
        M_{z_3}\mathcal{E} \cdot M_{z_1}\mathcal{E}M_{z_2}\mathcal{E}^{-1} &= \begin{pmatrix}
            p_{j_{10}} & 0&0\\
            -p_{j_{11}} & p_{j_{12}} & 0 \\
            -p_{j_{13}} & p_{j_{14}} & p_{j_{15}}
        \end{pmatrix} \begin{pmatrix}
             p_{k_1} & -p_{k_2} & -p_{k_3} \\
             -p_{k_4} & p_{k_5} & p_{k_6} \\
             -p_{k_7} & p_{k_8} & p_{k_9}
         \end{pmatrix} \\
         &= \begin{pmatrix}
            p_{j_{40}} & -p_{j_{41}} & -p_{j_{42}} \\ -p_{j_{43}} & p_{j_{44}} & p_{j_{45}} \\ -p_{j_{46}} & p_{j_{47}} & p_{j_{48}}
        \end{pmatrix}
    \end{split}
    \end{equation} and 
\begin{equation}
 \begin{split}
    M_{z_3}\mathcal{E}^{-1} \cdot M_{z_1}\mathcal{E}M_{z_2}\mathcal{E}^{-1} &= \begin{pmatrix}
            p_{j_{25}} & -p_{j_{26}} & -p_{j_{27}} \\
            0& p_{j_{28}} & p_{j_{29}}\\
            0&0& p_{j_{30}}
        \end{pmatrix} \begin{pmatrix}
             p_{k_1} & -p_{k_2} & -p_{k_3} \\
             -p_{k_4} & p_{k_5} & p_{k_6} \\
             -p_{k_7} & p_{k_8} & p_{k_9}
         \end{pmatrix} \\
         &= \begin{pmatrix}
            p_{j_{49}} & -p_{j_{50}} & -p_{j_{51}} \\ -p_{j_{52}} & p_{j_{53}} & p_{j_{54}} \\ -p_{j_{55}} & p_{j_{56}} & p_{j_{57}}
        \end{pmatrix} 
\end{split}
\end{equation} for some positive expression $p_{j_i}$, which are again of same type as that of $M_{z_1}\mathcal{E}M_{z_2}\mathcal{E}^{-1}$ in terms of the entries being a positive expression (see \cref{memei}). This concludes the result when $\rho(\gamma)$ contains a block of the form $M_{z_1}\mathcal{E}M_{z_2}\mathcal{E}^{-1}$. Now notice that, the entries of $M_{z_2}\mathcal{E}^{-1}M_{z_1}\mathcal{E}$ and $M_{z_1}\mathcal{E}M_{z_2}\mathcal{E}^{-1}$ are of same form in terms of being positive expressions (see \cref{memei} and \cref{meime}). So, it concludes the result when $\rho(\gamma)$ contains a block of the form $M_{z_2}\mathcal{E}^{-1}M_{z_1}\mathcal{E}$ also.\\
\textbf{Case II:} When $\gamma$ is a peripheral loop:\\
We know that 
\[ \rho(\gamma)= M_{z_r}\mathcal{E}^{\delta}\cdots M_{z_1}\mathcal{E}^{\delta} \] for some $z_i \in \mathcal{Z}$ and $\delta \in \{\pm 1 \}$, i.e., powers of all the $\mathcal{E}$ are same (see \Cref{peripheral} and \cite[Section 5.3]{CTT}). When $\delta=1$, from \cref{me}, we get
\begin{equation}\label{peri1}
    \rho(\gamma) =
\begin{pmatrix}
    x_1x_2\cdots x_r & 0 &0\\
    * & e^{i(\alpha_1+\cdots +\alpha_r)} & 0\\
    *&*& \frac{1}{x_1x_2\cdots x_r}
\end{pmatrix}
\end{equation}
and for $\delta=-1$, from \cref{mei}, we get
\begin{equation}\label{peri2}
    \rho(\gamma) =
\begin{pmatrix}
    x_1x_2\cdots x_r & * &*\\
    0 & e^{i(\alpha_1+\cdots +\alpha_r)} & *\\
    0&0& \frac{1}{x_1x_2\cdots x_r}
\end{pmatrix}.
\end{equation}
In both the cases, we see that the diagonal entries are positive expressions.
\end{proof}
\begin{figure}
\centering
\begin{tikzpicture}
  \draw (0,0)--(135:5)--(105:5)--(0,0)--(75:5)--(105:5) (0,0)--(45:5)--(75:5); 
  \draw [line width=0.5mm] (141.43:3.4)--(126.43:3.4)--(113.57:3.4)--(96.43:3.4)--(83.57:3.4)--(66.43:3.4)--(53.57:3.4)--(38.57:3.4);
  \draw [line width=0.5mm] (126.43:3.4)--(120:4.2)--(113.57:3.4) (96.43:3.4)--(0,4.2)--(83.57:3.4) (66.43:3.4)--(60:4.2)--(53.57:3.4);
   \draw [line width=0.5mm] (120:4.2)--(120:5.2) (0,4.2)--(0,5.2) (60:4.2)--(60:5.2);
   \draw (-0.01,0.14) node[anchor=north west] {$\widetilde{p}$} (121:3.38) node[rotate=30] {$>$} (0,3.38) node {$>$} (60:3.38) node[rotate=-30] {$>$} (35:3.42) node[rotate=-45] {$\cdots$} (60:5.4) node[rotate=60] {$\cdots$} (0,5.4) node[rotate=90] {$\cdots$} (120:5.4) node[rotate=120] {$\cdots$} (142:3.4) node[rotate=45] {$\cdots$} (0,2.5) node[red] {$\widetilde{\gamma}$} (1.4,1) node {$T$} (122.9:3.8) node[rotate=-80] {$>$} (117.1:3.8) node[rotate=145] {$>$} (92.9:3.8) node[rotate=-120] {$>$} (87:3.8) node[rotate=120] {$>$} (62.9:3.8) node[rotate=215] {$>$} (57:3.8) node[rotate=85] {$>$};
   \draw[->, thick] (1,4.4) arc[start angle=0, end angle=270, radius=2mm];
   \draw[->, thick] (3,3.5) arc[start angle=0, end angle=270, radius=2mm];
    \draw[->, thick] (-1.3,4.2) arc[start angle=0, end angle=270, radius=2mm];
   \draw[line width=0.5mm,  red] (2.5,1.8) arc[start angle=45, end angle=135, radius=3.5cm];
   \node [red, rotate=-45] at (34:3.08) {$\cdots$};
   \node [red, rotate=45] at (145:3.03) {$\cdots$};
\end{tikzpicture}
\caption{Segment of a lift of a peripheral loop $\gamma$.}
\label{peripheral}
\end{figure}
\begin{cor}\label{trpos}
    For $\gamma \in \pi_1(S_{g,k}),\ \rho(\gamma)$ has a representative in $\mathrm{U}(2,1)$ such that its trace $\mathrm{tr}(\rho(\gamma))$ is a positive expression.
\end{cor}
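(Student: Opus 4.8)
The plan is to read the trace directly off the matrix representatives furnished by \Cref{mainprop}, since the trace is insensitive to the choice of representative only up to the scalar ambiguity in $\puto$, and we have already committed to one explicit representative. First I would invoke \Cref{mainprop} to fix, for the given $\gamma$, a representative of $\rho(\gamma)$ in $\puto$ whose entries are (signed) positive expressions. The crucial structural observation is that in all three cases appearing in that proposition --- the non-peripheral form and the two peripheral forms --- the $+$ sign is attached to every diagonal entry, so that the diagonal consists precisely of the positive expressions $p_1, p_5, p_9$.

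Next I would simply compute
\[ tr(\rho(\gamma)) = p_1 + p_5 + p_9. \]
Since a finite sum of positive expressions is again a positive expression (by the closure properties recorded in the proposition preceding \Cref{ineq}), the right-hand side is a positive expression, which is exactly the claim.

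The only point that requires any care --- and it is the ``main obstacle,'' such as it is --- is to confirm that the sign convention in \Cref{mainprop} genuinely places a plus sign on all three diagonal positions in each of the three matrix shapes, so that no cancellation between differently-signed terms can occur and no term enters the trace with a negative coefficient. This is immediate from inspecting the stated forms, where $p_1, p_5, p_9$ sit on the diagonal in each case. Because positivity of a positive expression refers only to the sign of its coefficients $c_j$ (and not to the values of the phases $e^{i\theta_j}$, as stressed in the remark following \Cref{px}), there is no hidden subtlety coming from the off-diagonal minus signs; they never touch the diagonal. Hence the corollary follows essentially by inspection together with the additive closure of positive expressions.
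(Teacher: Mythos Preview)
Your argument is correct and matches the paper's intent: the corollary is stated immediately after \Cref{mainprop} without a separate proof, precisely because the diagonal entries $p_1, p_5, p_9$ are positive expressions in every case and their sum is again a positive expression by the closure property. There is nothing to add.
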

\begin{notation}
    For two matrices $A = (a_{ij})_{n \times n}$ and $B = (b_{ij})_{n \times n}$, we denote $|A| = (|a_{ij}|)_{n \times n}$, and we write $A \le B$ if $a_{ij} \le b_{ij}$ for all $i, j$.
\end{notation}
To prove our result, we will use the following results from matrix analysis:
\begin{thm}\cite[Theorem 8.1.18]{HJ}\label{spectralIneq}
    Let $A=(a_{ij}),B=(b_{ij}) \in M_n$ such that $|A| \le B$. Then $\sigma(A) \le \sigma (|A|) \le \sigma(B)$.
\end{thm}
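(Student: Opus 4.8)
The plan is to deduce all three quantities from Gelfand's spectral-radius formula $\sigma(M)=\lim_{k\to\infty}\|M^k\|^{1/k}$, which holds for any submultiplicative matrix norm $\|\cdot\|$ on $M_n$, and to pick a norm adapted to the entrywise order. I would use the entrywise $\ell^1$-norm $\|M\|=\sum_{i,j}|m_{ij}|$. The first task is to check this is submultiplicative: setting $u_k=\sum_i|m_{ik}|$ and $v_k=\sum_j|n_{kj}|$, the triangle inequality gives $\|MN\|\le\sum_k u_k v_k\le\big(\sum_k u_k\big)\big(\sum_k v_k\big)=\|M\|\,\|N\|$, where the middle inequality holds because all $u_k,v_k\ge 0$. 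This norm is moreover \emph{monotone}: $\|M\|=\||M|\|$, and $|M|\le|N|$ entrywise forces $\|M\|\le\|N\|$.

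The second ingredient is a comparison of matrix powers at the entrywise level. The triangle inequality applied to a product gives $|MN|\le|M|\,|N|$ entrywise for all $M,N$; iterating yields $|A^k|\le|A|^k$ for every $k\ge 1$. Separately, the hypothesis $|A|\le B$ forces $B\ge 0$, and since multiplication of entrywise-nonnegative matrices preserves the entrywise order, a short induction gives $|A|^k\le B^k$. Chaining the two comparisons produces $|A^k|\le|A|^k\le B^k$, with all three matrices nonnegative.

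Finally I would apply the monotone norm to this chain, obtaining $\|A^k\|=\||A^k|\|\le\||A|^k\|\le\|B^k\|$. Taking $k$-th roots, letting $k\to\infty$, and invoking Gelfand's formula for each of $A$, $|A|$, and $B$ turns these into $\sigma(A)\le\sigma(|A|)\le\sigma(B)$. The only genuinely delicate point is the double power comparison $|A^k|\le|A|^k\le B^k$: the first inequality relies on submultiplicativity of the entrywise absolute value under products, while the second relies on order-preservation of multiplication among nonnegative matrices — this is exactly where the nonnegativity of $B$ (built into the hypothesis $|A|\le B$) is indispensable.
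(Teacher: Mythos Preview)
Your proof is correct. The Gelfand formula reduction is clean: the entrywise $\ell^1$-norm is indeed submultiplicative (your inequality $\sum_k u_k v_k \le (\sum_k u_k)(\sum_\ell v_\ell)$ holds because the right side is $\sum_{k,\ell} u_k v_\ell$, which dominates the diagonal sum termwise), and the two induction steps $|A^k|\le|A|^k$ and $|A|^k\le B^k$ go through exactly as you say, using respectively that $|MN|\le|M|\,|N|$ and that left/right multiplication by a nonnegative matrix preserves the entrywise order.

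As for comparison: the paper does not prove this statement at all --- it is quoted verbatim from Horn--Johnson's \emph{Matrix Analysis} (Theorem~8.1.18) and used as a black box in the proof of Theorem~1.3. Your argument is essentially the standard one given in that reference, so there is nothing to contrast.
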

\begin{lem}\label{samesigma}
    Let \[ A=\begin{pmatrix}
        a & b & c\\ d & f & h \\j & l & m
    \end{pmatrix} \text{ and } \ B=\begin{pmatrix}
        a & -b & -c\\ -d & f & h \\-j & l & m
    \end{pmatrix}. \] Then $A$ and $B$ have same set of eigenvalues.
\end{lem}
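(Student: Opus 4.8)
The plan is to exhibit an explicit similarity between $A$ and $B$, from which the equality of their eigenvalues is immediate. The key observation is that $B$ is obtained from $A$ by flipping the sign of exactly the off-diagonal entries in positions $(1,2),(1,3),(2,1),(3,1)$, while the diagonal and the entries $h,l$ in positions $(2,3),(3,2)$ are left untouched. Put differently, the flipped positions are precisely those $(i,j)$ for which exactly one of the indices equals $1$. This sign pattern is of the form $B_{ij}=\eta_i\eta_j A_{ij}$ for the single sign vector $\eta=(\eta_1,\eta_2,\eta_3)=(-1,1,1)$, since $\eta_i\eta_j=-1$ exactly when one of $i,j$ is $1$ and the other is not, and $\eta_i\eta_j=1$ otherwise. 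This structure suggests conjugating by the diagonal involution $D=\mathrm{diag}(-1,1,1)$.

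Concretely, I would set $D=\mathrm{diag}(-1,1,1)$ and note that $D^{-1}=D$, as $D^2=I$. A direct entrywise computation using $(DAD^{-1})_{ij}=\eta_i\eta_j A_{ij}$ then recovers $B$: the diagonal entries $a,f,m$ and the entries $h,l$ are preserved (there $\eta_i\eta_j=1$), while $b,c,d,j$ each acquire a factor $-1$ (there $\eta_i\eta_j=-1$). Hence $B=DAD^{-1}$, so $A$ and $B$ are conjugate. Conjugate matrices share the same characteristic polynomial, and therefore the same set of eigenvalues (with multiplicities), which is the assertion.

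As for the difficulty, there is essentially no obstacle here; the only point that requires a moment's thought is that the prescribed sign pattern is \emph{consistent}, meaning that it factors as $\eta_i\eta_j$ for a single sign vector $\eta$ rather than being an arbitrary choice of signs. Once one recognizes that the flipped entries are exactly those indexed by $i=1$ or $j=1$ but not both, the choice $\eta=(-1,1,1)$ is forced, and the verification that $B=DAD^{-1}$ is routine. This lemma, combined with \Cref{spectralIneq}, is what will let us pass from the entrywise comparison of $\rho(\gamma)$ against its real-point substitute to a comparison of their spectral radii, and hence of their Bergman translation lengths.
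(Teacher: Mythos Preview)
Your proof is correct and is essentially the same idea as the paper's: the paper shows $\det(B-xI)=\det(A-xI)$ by multiplying the first row and then the first column of $B-xI$ by $-1$, which is exactly the effect of conjugating by $D=\mathrm{diag}(-1,1,1)$. Your formulation via the explicit similarity $B=DAD^{-1}$ is a slightly cleaner packaging of the same row/column sign change.
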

\begin{proof}
    It is enough to show that the characteristic polynomials of $A$ and $B$ are same. We see that \begin{equation*}
    \begin{split}
       det(B-xI)= \begin{vmatrix}
        a-x & -b & -c\\ -d & f-x & h \\ -j & l & m-x
    \end{vmatrix} &= -\begin{vmatrix}
        -(a-x) & b & c\\ -d & f-x & h \\ -j & l & m-x
    \end{vmatrix} \\ &= +\begin{vmatrix}
        a-x & b & c\\ d & f-x & h \\ j & l & m-x
    \end{vmatrix} =det(A-xI).
    \end{split}   
    \end{equation*}

We multiply the first row and first column with $-1$ to get the second and third equality respectively. This proves the lemma.
\end{proof}
\begin{rmk}
    Note that $A$ and $B$ are conjugate by the matrix $S=\begin{pmatrix}
    1&0&0\\0&-1&0\\0&0&-1
    \end{pmatrix}.$
\end{rmk}
\noindent \emph{Proof of \Cref{mainthm1}}.
For a given $T$-bent representation $\rho:\pi_1(S_{g,k}) \rightarrow \puto$, we determine the framing $\phi :\mathcal{F}_{\infty} \rightarrow \partial \HtC$ and get the edge invariants $\{z_1,z_2,\cdots,z_{3(2g-2+k)}\}$ associated with some fixed ideal triangulation $T$. Then we replace each of the invariants with their absolute values $\{x_1,x_2,\cdots,x_{3(2g-2+k)}\}$ and define $\rho_0:\pi_1(S_{g,k}) \rightarrow \poto$ to be the representation corresponding to these real and positive edge-invariants. Now from \cite[Theorem 2]{WillBending}, we know that $\rho_0$ is discrete and faithful.

We claim that $\rho_0$ dominates $\rho$ in the Bergman translation length spectrum. To prove it, let $\gamma \in \pi_1(S_{g,k})$ be a non-peripheral curve. Then from \Cref{mainprop}, we get, \[ \rho(\gamma) = \begin{pmatrix}
            p_1 & -p_2 & -p_3\\ -p_4 & p_5 & p_6 \\ -p_7 & p_8 & p_9
        \end{pmatrix} \]
for some positive expressions $p_i$. Then clearly,  \[ \rho_0(\gamma) = \begin{pmatrix}
            p_1(0) & -p_2(0) & -p_3(0)\\ -p_4(0) & p_5(0) & p_6(0) \\ -p_7(0) & p_8(0) & p_9(0)
        \end{pmatrix}. \] Let $B(\gamma)$ denote the matrix \[B(\gamma):=\begin{pmatrix}
            p_1(0) & p_2(0) & p_3(0)\\ p_4(0) & p_5(0) & p_6(0) \\ p_7(0) & p_8(0) & p_9(0)
        \end{pmatrix}.\]
    Then using \Cref{ineq}, we get \[ |\rho(\gamma)| = \begin{pmatrix}
            |p_1| & |-p_2| & |-p_3| \\ |-p_4| & |p_5| & |p_6| \\ |-p_7| & |p_8| & |p_9|
        \end{pmatrix} \le \begin{pmatrix}
            p_1(0) & p_2(0) & p_3(0)\\ p_4(0) & p_5(0) & p_6(0) \\ p_7(0) & p_8(0) & p_9(0)
        \end{pmatrix} =B(\gamma).\]
    Then from \Cref{spectralIneq}, we get
    \begin{equation}\label{spectral_sb}
         \sigma(\rho(\gamma)) \le \sigma(B(\gamma)). 
    \end{equation} Now applying \Cref{samesigma} on $B(\gamma)$ and $\rho_0(\gamma)$, we have \[ \sigma(B(\gamma)) = \sigma(\rho_0(\gamma)). \] From the above two equations, we get \[ \sigma(\rho(\gamma)) \le \sigma(\rho_0(\gamma)) \implies \ell(\rho(\gamma)) = 2\ln \sigma(\rho(\gamma)) \le 2\ln \sigma(\rho_0(\gamma))=\ell(\rho_0(\gamma)). \]

    If $\gamma$ is a peripheral loop, then from \cref{peri1} and \cref{peri2}, we know that 
\[  \rho(\gamma) =
\begin{pmatrix}
    x_1x_2\cdots x_r & 0 &0\\
    * & e^{i(\alpha_1+\cdots +\alpha_r)} & 0\\
    *&*& \frac{1}{x_1x_2\cdots x_r}
\end{pmatrix} \text{ or,  }
\begin{pmatrix}
    x_1x_2\cdots x_r & * &*\\
    0 & e^{i(\alpha_1+\cdots +\alpha_r)} & *\\
    0&0& \frac{1}{x_1x_2\cdots x_r}
\end{pmatrix}
\] and \[  \rho_0(\gamma) =
\begin{pmatrix}
    x_1x_2\cdots x_r & 0 &0\\
    * & 1 & 0\\
    *&*& \frac{1}{x_1x_2\cdots x_r}
\end{pmatrix} \text{ or,  }
\begin{pmatrix}
    x_1x_2\cdots x_r & * &*\\
    0 & 1 & *\\
    0&0& \frac{1}{x_1x_2\cdots x_r}
\end{pmatrix}\ \text{ respectively}.
\] If $x_1x_2\cdots x_r \neq 1$, then $\rho(\gamma)$ and $\rho_0(\gamma)$ are loxodromic. Then \[ \ell (\rho(\gamma))= 2 |\ln (x_1x_2\cdots x_r)|=\ell(\rho_0(\gamma)). \] If  $x_1x_2\cdots x_r = 1$, then $\ell(\rho(\gamma))=0=\ell(\rho_0(\gamma))$. This completes the proof.
\qed

\begin{cor}\label{trace_ineq2}
Let $\rho$ and $\rho_0$ be as in \Cref{mainthm1}. Then \[ |\mathrm{tr}(\rho(\gamma))| \le |\mathrm{tr}(\rho_0(\gamma))| \] for all $\gamma \in \pi_1(S_{g,k})$.
\end{cor}
\begin{proof}
    To prove the corollary, we shall use the fact that the smooth function $f:\mathbb{R}^+ \rightarrow \mathbb{R}^+$ defined by \[ f(x)=x+\frac{1}{x} \] is monotone increasing on $[1,\infty)$ and attains its global minima at $x=1$ with $f(1)=2$.

    Let $\{re^{i\phi},r^{-1}e^{i\phi},e^{-2i\phi}\}$ and $\{r_0,r_0^{-1},1\}$ be the eigenvalues of $\rho(\gamma)$ and $\rho_0(\gamma)$ respectively. Then \[|\mathrm{tr}(\rho(\gamma))| = |re^{i\phi}+r^{-1}e^{i\phi}+e^{-2i\phi}| \le r+r^{-1}+1 \le r_0+r_0^{-1}+1 =|\mathrm{tr}(\rho_0(\gamma))|. \]
    This completes the proof. 
\end{proof}
\begin{rmk}
    Note that, \Cref{trace_ineq} can be independently proved from \Cref{trpos}. Let $\mathrm{tr}(\rho(\gamma))=p_t$ for some positive expression $p_t$. Then we can see that $ \mathrm{tr}(\rho_0(\gamma)) = p_t(0)$. Now from \Cref{ineq}, we get \[ |\mathrm{tr}(\rho(\gamma))| = |p_t| \le p_t(0) = |\mathrm{tr}(\rho_0(\gamma))|. \]
\end{rmk}

\appendix
\section*{Appendix}
\label{sec:appendix} While domination holds with respect to the Bergman translation length and the modulus of the trace, it is a natural question whether the domination is preserved under the discriminant function  \[ f(z) = |z|^4 - 8\Re(z^3) + 18|z|^2 - 27 \]  which is evaluated at the traces of $\rho(\gamma)$ and $\rho_0(\gamma)$ (see \Cref{discri}). We thank John R. Parker for asking this question. In this Appendix we collect evidences to see that the function $f$ exhibits more subtle behavior and does not satisfy a similar domination property. To better understand this discrepancy, we conducted numerical experiments using \emph{Mathematica}. These computations involve evaluating $f(\mathrm{tr}(\rho(\gamma)))$ and $f(\mathrm{tr}(\rho_0(\gamma)))$ where $\rho(\gamma)=M_{z_1}\mathcal{E}M_{z_2}\mathcal{E}^{-1}$ but with randomly sampled parameters $z_1,z_2$ in order to test whether domination holds for the discriminator function.

The following code samples random parameters $x, y \in (0,5)$ and $ a, b \in \left(0, \frac{\pi}{2}\right)$ where $z_1=xe^{ia}$ and $z_2=ye^{ib}$, constructs $\rho(\gamma)$ and $\rho_0(\gamma)$, and evaluates the discriminator function at their traces. Whenever the inequality
\[
f(\mathrm{tr}(\rho(\gamma))) > f(\mathrm{tr}(\rho_0(\gamma)))
\]
holds, indicating a failure of domination, the code outputs the sampled parameters along with the corresponding function values. Otherwise, it prints a confirmation that domination is preserved.

\subsection*{Mathematica code}
\begin{verbatim}
f[x_] := Abs[x]^4 - 8 Re[x^3] + 18 Abs[x]^2 - 27;
T = {{-1, Sqrt[2], 1}, {-Sqrt[2], 1, 0}, {1, 0, 0}};
    (*The matrix \[Epsilon] in equation 18, Will*)

M[x_, a_] := {{0, 0, x}, {0, Exp[I a], 0}, {1/x, 0, 0}};
    (*The matrix M_{x,\[Alpha]} in equation 13, Will*)

A[x1_, x2_, a1_, a2_] := M[x1, a1].T.M[x2, a2].Inverse[T];
For[j = 1, j <= 10, j++,
 Module[{x, y, a, b, A1, A2, trace, TRACE, fIneq},
  {x, y} = RandomReal[5, 2];
  {a, b} = RandomReal[{0, Pi/2}, 2];
  A1 = A[x, y, a, b];
  A2 = A[x, y, 0, 0];
  
  trace = Tr[A1];(* trace of A1 *)
  TRACE = Tr[A2];(* 
  trace of A2 *)
  
  fIneq = 
   If[f[trace] > f[TRACE], 
    Row[{"{x,y,a,b} = {", x, ", ", y, ", ", a, ", ", b, 
      "};  f(trace) = ", f[trace], ", f(TRACE) = ", f[TRACE]}], 
    "True!"];
  
  Print[Row[{fIneq}]];
  ]]
\end{verbatim}
\subsection*{Sample Output}
\begin{verbatim}
{x,y,a,b} = {3.0497, 2.0373, 0.2936, 0.0886}; 
f(trace) = 13328., f(TRACE) = 12855.2
True!
True!
{x,y,a,b} = {1.8096, 2.0235, 1.3414, 0.1429}; 
f(trace) = 8837.83, f(TRACE) = 6742.79
True!
True!
True!
True!
True!
True!
\end{verbatim}

\bibliographystyle{alpha}
\bibliography{references}
\end{document}